\documentclass[article]{amsart}
\usepackage{amsmath,amsthm,amssymb}
\usepackage{hyperref}
\usepackage{cite}

\usepackage{graphicx}
\usepackage{tikz}
\usetikzlibrary{calc}
\usetikzlibrary{positioning}
\usepackage{subfigure}

\usepackage[a4paper,
            left=30mm,
            right=30mm,
            top=30mm,
            bottom=30mm]{geometry}

\newcommand{\Var} {\mathrm {Var}}
\newcommand{\Prob} {\mathbb {P}}
\newcommand{\prob}[1]{\Prob\left(#1\right)}

\newcommand{\nc}{\mathcal{N}}
\newcommand{\tc}{\mathcal{T}}
\newcommand{\htc}{\hat{\tc}}
\newcommand{\st}{\textbf{t}}
\newcommand{\N}{N_{\st}}

\newcommand{\n}{n_{\st}}

\newcommand{\hf}{\tilde{f}_{p,q}}
\newcommand{\hF}{\tilde{F}_{p,q}}

\newcommand{\E}{\mathbb{E}}

\newcommand{\hxi}{\hat{\xi}}

\newcommand\dto{\overset{\mathrm{d}}{\to}}

\newcommand{\giventhat}{\@ifstar\@giventhatstar\@giventhatnostar}

\title{Asymptotic normality for general subtree counts in  conditioned Galton--Watson trees}
\author{Fameno Rakotoniaina }
\thanks{Fameno Rakotoniaina acknowledges support from the DAAD In-Region Programme (SUN Mathematics, South Africa, 2021; grant no.57552766), as well as additional support from the Faculty of Science and the Mathematics Division at Stellenbosch University.}

\author{Dimbinaina Ralaivaosaona}
\thanks{Dimbinaina Ralaivaosaona acknowledges support from the National Research Foundation of South Africa (NRF), Award No.~RA231127167905.}
\address{Department of Mathematical Sciences \\ Stellenbosch University \\ Private Bag X1 \\ Matieland 7602, South Africa.}
\email{frakotoniaina@aimsric.org / naina@sun.ac.za}

\newtheorem{theorem}{Theorem}
\newtheorem{lemma}[theorem]{Lemma}
\newtheorem{corollary}[theorem]{Corollary}
\newtheorem{proposition}[theorem]{Proposition}

\newtheorem{assumption}{Assumption}
\theoremstyle{definition}

\newtheorem*{conjecture*}{Conjecture}
\newtheorem*{proposition*}{Proposition}
\theoremstyle{remark}

\newtheorem*{remark*}{Remark}
\newtheorem{example}[theorem]{Example}

\keywords{dada, mama, etc}

\begin{document}
\begin{abstract}

Let $\mathcal{T}$ denote a Galton--Watson tree with offspring distribution $\xi$ satisfying $\mathbb{E}(\xi)  = 1$, and let $\mathcal{T}_n$ be the Galton--Watson tree conditioned to have exactly $n$ nodes. We show that, under a mild moment condition on $\xi$, the number of occurrences of a fixed rooted plane tree $\mathbf{t}$ as a general subtree in $\mathcal{T}_n$ is asymptotically normal as $n \to \infty$, with both mean and variance linear in $n$. In addition, we prove that this limiting distribution is nondegenerate except for some special cases where the variance remains bounded. These results confirm a conjecture of Janson in recent work on the same topic. Finally, we present examples showing that if the proposed moment condition on $\xi$ is violated, the conclusion may fail.

\end{abstract}
\keywords{critical Galton--Watson trees, conditioned Galton--Watson trees, subtree counts, asymptotic normality, moment condition, additive functional}

\maketitle

\section{Introduction}
  
Throughout this paper, all trees are rooted and ordered (or plane), i.e., the order of the root branches matters. Let $\st$ be a fixed tree, and for any other tree $T$, let $\N(T)$ denote the number of occurrences of $\st$ as a general subtree of $T$, and let $\n(T)$ denote the number of occurrences of $\st$ attached to the root of $T$. We use the same notation as in Janson~\cite{J21}, where these parameters are rigorously defined; see \cite[Section~2]{J21}.

Informally, a rooted tree $T$ is drawn upside down, with the root as the highest node. An occurrence of a rooted tree $\st$ as a general subtree of $T$ is an embedding of $\st$ into $T$ (as rooted ordered trees) such that the image of the root of $\st$ is the highest node in its image in $T$, and for each node $v$ of $\st$, the degree of its image in $T$ is greater than or equal to the degree of $v$ in $\st$.

The tree functional $\N$ satisfies the so-called \emph{additive property}, with $\n$ as its associated \emph{toll function}. That is, if a tree $T$ has root branches $T_1, T_2, \dots, T_d$, then
\[
\N(T)=\sum_{k=1}^{d} \N(T_k) + \n(T).
\]

Subtree pattern counts are natural parameters to study in random trees, and they have been investigated in various forms throughout the literature. The most common and extensively studied pattern is the so-called \emph{fringe-subtree pattern}. A fringe subtree of a tree $T$ is the subtree induced by a node together with all its descendants. Results on fringe-subtree counts can be found, for example, in~\cite{MR1102319,J16} and the references therein. Another type of subtree pattern—defined via tree embeddings in which the degrees of the internal nodes are preserved—was studied in~\cite{CHYZAK_DRMOTA_KLAUSNER_KOK_2008}.
   
We only consider the Galton–Watson tree $\tc$ with a critical offspring distribution $\xi$ satisfying the following standard conditions (which we assume to hold from now on without further mention):
\begin{itemize}
    \item $\mathbb{P}(\xi=0)>1$, 
    \item $\E(\xi)=1$, and 
    \item $\gcd\{k \ :\ \mathbb{P}(\xi=k)>0\}=1$.
\end{itemize}
The first condition ensures that the Galton--Watson process can die out; the second is the criticality condition; and the third is required only to guarantee that $\mathbb{P}(|\tc| = n) > 0$ for all sufficiently large $n$ (where, $|T|$ denotes the number of nodes in tree $T$). If the third condition is omitted and, for example, $\gcd\{k : \mathbb{P}(\xi = k) > 0\} = d > 1$, then one must restrict to $n \equiv 1 \pmod d$. We shall always assume that $\Var(\xi)<\infty$, although this will follow from the assumptions imposed throughout the paper.

Under this setting, Janson~\cite{J21} proved a law of large numbers for $\N(\tc_n)$, where $\tc_n$ is the Galton--Watson tree conditioned on having $n$ nodes, with offspring distribution $\xi$ satisfying $\E(\xi^{\Delta}) < \infty$, and where $\Delta := \Delta(\st)$ denotes the maximum outdegree of the nodes in $\st$. In the same paper (see \cite[Section~5]{J21}), it was conjectured that, under an additional suitable moment condition on $\xi$, the random variable $\N(\tc_n)$ should satisfy a nondegenerate central limit theorem, except for a few exceptional cases. This has already been verified in the case where $\xi$ is bounded; see \cite[Proposition~5.1]{J21}. Here, we extend this result further.
\begin{theorem}\label{main theorem}
Let $\tc_n$ be the conditioned Galton--Watson tree with offspring distribution $\xi$, satisfying $\E(\xi^{2\Delta(\st)+1}) < \infty$. Then, as $n \to \infty$, the mean and variance of $\N(\tc_n)$ satisfy the asymptotic estimates
\[
\mathbb{E}(\N(\tc_n)) = \mu n + o(\sqrt{n})
\quad \text{and} \quad
\Var(\N(\tc_n)) = \gamma^2 n + o(n),
\]
where $\mu = \mathbb{E}(\n(\tc))$, and $\gamma \ge 0$ is a constant. Moreover, if
\[
\limsup_n \Var(\N(\tc_n)) = \infty,
\]
then $\gamma \neq 0$, and
\[
\frac{\N(\tc_n) - \mu n}{\gamma \sqrt{n}} \dto \nc(0,1).
\]
\end{theorem}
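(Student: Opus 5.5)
The natural route is Janson's general CLT for additive functionals of conditioned Galton--Watson trees (Janson, \emph{Asymptotic normality of fringe subtrees and additive functionals in conditioned Galton--Watson trees}, Theorem~1.5 and its variants). That theorem says roughly the following. Let $F(T)=\sum_{v} f(T_v)$ with toll $f$, and suppose $\E|f(\tc)|<\infty$, the toll has small second moment on conditioned trees, say $\E\bigl(f(\tc_k)^2\bigr)=O(k^{\alpha})$ for suitable $\alpha<1$, and the mean $\E f(\tc_k)$ converges fast enough. Then $\E F(\tc_n)=\mu n+o(\sqrt n)$, $\Var F(\tc_n)=\gamma^2 n+o(n)$, and $(F(\tc_n)-\mu n)/\sqrt n \dto \nc(0,\gamma^2)$. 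We apply this with $f=\n$ and $F=\N$. The identification $\mu=\E \n(\tc)$ is then automatic.

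\textbf{Toll moments.} The work reduces to controlling $\n(\tc_k)$. Write $\n(T)$ as a sum over root-attached embeddings. If the root of $\st$ has degree $d\le\Delta$ and the root of $T$ has degree $D$, then
\[
\n(T)=\sum_{i_1<\dots<i_d}\prod_j \n_j(T_{i_j}),
\]
where $\n_j$ counts root-attached copies of the $j$-th branch of $\st$. Recursively, $\n(T)$ is bounded by a product of binomial coefficients $\binom{D_v}{d_v}$ over the images $v$ of the at most $|\st|$ nodes of $\st$, each at most $D_v^{\Delta}$.

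For the unconditioned tree, $\E\,\n(\tc)^2$ involves sums over pairs of embeddings. Each node of $\tc$ is used at most twice, contributing at most $D_v^{2\Delta}$. Size-biasing along the spine makes one extra factor of $D$ appear, so the requirement is $\E(\xi^{2\Delta+1})<\infty$, which is exactly our hypothesis. This gives $\E\,\n(\tc)^2<\infty$.

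For the conditioned tree $\tc_k$, the top $O(1)$ levels have degrees that behave like $\xi$ (or size-biased $\xi$) up to a factor $O(k^{1/2})$ coming from $\prob{|\tc|=k}\sim ck^{-3/2}$. Using the standard bound $\E\bigl(g(\tc_k)\bigr)\le C k^{3/2}\,\E\bigl(g(\tc)\bigr)$ is too crude. Instead, condition on the finite top part $\tc^{(h)}$ of depth $h=$ height$(\st)$ and use the local estimate
\[
\prob{|\tc|=k \mid \tc^{(h)}}\le C\,\frac{m}{k^{3/2}}\,\text{-type bounds},
\]
where $m$ is the number of leaves of $\tc^{(h)}$ at depth $h$. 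This yields $\E\,\n(\tc_k)^2=O(1)$, or at worst $o(k)$. It also gives $\E\,\n(\tc_k)\to\E\,\n(\tc)$ at rate $O(k^{-1/2})$, which is enough for the mean condition in Janson's theorem and for the error $o(\sqrt n)$.

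\textbf{Main obstacle.} I expect the conditioned second-moment bound to be the main technical step, together with verifying the rate condition on $\E \n(\tc_k)-\mu$ that Janson's theorem needs. This is where $2\Delta+1$ moments are used sharply, and the bookkeeping of overlapping pairs of embeddings on a spine must be done carefully.

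\textbf{Non-degeneracy.} From Janson's theorem, $\gamma^2=\lim \Var \N(\tc_n)/n$. If $\gamma=0$, I would show that $\Var\N(\tc_n)$ stays bounded. Janson's variance formula expresses $\gamma^2$ as the variance of a projected quantity, namely
\[
\Var\Bigl(\N(\tc)-\mu|\tc|\Bigr)
\]
corrected by a covariance term. Vanishing of $\gamma$ forces $\N(T)-\mu|T|$ to equal a deterministic function of only a bounded local statistic, for example the number of leaves or a linear combination of degree counts. Since such a statistic has variance $O(n)$ unless it is constant, $\gamma=0$ would force it to be constant. That is the exceptional case identified by Janson in \cite[Section~5]{J21}, and in it $\Var \N(\tc_n)=O(1)$. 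So $\limsup \Var=\infty$ implies $\gamma\neq0$, and normalizing gives the stated $\nc(0,1)$ limit.
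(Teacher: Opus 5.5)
Your treatment of the mean is fine and matches Lemmas~\ref{evaluate n(Tn)}--\ref{expectation of Nt}. The variance/CLT step, however, has a genuine gap: Theorem~1.5 of \cite{J16} cannot be applied with toll $f=\n$.

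That theorem is a result for \emph{small} tolls. Its CLT part needs the variance of the toll on $\tc_n$ to decay summably, $\sum_n n^{-1}\sqrt{\Var f(\tc_n)}<\infty$. This is what lets the cross term $\E\bigl(\tilde f(\tc_k)\tilde F(\tc_k)\bigr)$ in Janson's variance formula be handled by Cauchy--Schwarz. For $f=\n$ the series diverges, because $\liminf_n \Var\n(\tc_n)\ge \Var\n(\htc)$, which is positive unless $\n(\htc)$ is degenerate. This is consistent with Janson leaving the result as a conjecture in \cite{J21}, proved only for bounded $\xi$, where $\n$ is a bounded local toll covered by his Theorem~1.13.

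A growth bound on $\E\n(\tc_k)^2$ is not the right criterion either. You suggest $o(k)$ would suffice, but take the star example of Section~\ref{sec5} with $\Prob(\xi=j)\asymp j^{-2\Delta-1}/\log j$ and $\Delta\ge 3$. One checks that $\E\n(\tc_k)^2=O(k^{1/2})$ and the mean converges at rate $O(k^{-1/2})$, yet $\Var\N(\tc_n)/n\to\infty$. So your second-moment bound (Lemma~\ref{expectation of products of n(T) with finite tree size}(c)) is necessary but far from sufficient.

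Two ideas are missing.

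\emph{Cross-term bound.} One must bound the cross term by $O(1)$ directly, uniformly over truncations; this is Lemma~\ref{lem:toll_add}. For $v$ at depth $\ell\ge M$, the root toll is determined by $\tc_k^{(\ell)}$, while $\hf(\tc_{k,v})$ has conditional mean $0$ given $\tc_k^{(\ell)}$ and the branch sizes. So only the top $M$ levels contribute, and those are bounded via Lemma~\ref{expectation of products of n(T) with finite tree size}. Inserted into Janson's variance formula, this gives $\Var F_{p,q}(\tc_n)\le\varepsilon(p)n$ with $\varepsilon(p)\to 0$.

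\emph{Truncation.} No off-the-shelf CLT covers the unbounded local toll $\n$, so one truncates. The toll $f_{1,p}$ is bounded and local, so Theorem~1.13 of \cite{J16} gives $\nc(0,\gamma_p^2)$ limits. Lemma~\ref{lem:0}, together with the uniform bound on $\Var F_{p,\infty}(\tc_n)$, then transfers both the CLT and $\Var\N(\tc_n)/n\to\gamma^2$.

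Your non-degeneracy paragraph also lacks a mechanism. The quantity $\Var(\N(\tc)-\mu|\tc|)$ is typically infinite, since $\E|\tc|=\infty$. Nothing in your sketch shows that $\gamma=0$ forces a rigid structure.

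The paper gets this from a resampling argument. Suppose two trees $\tau_1,\tau_2$ with positive probability, equal size and equal $(M-1)$-cut-off have different $\N$. Conditionally on the rest of $\tc_n$, the count is then a constant plus $\delta$ times a binomial with $\Theta(n)$ trials, so $\gamma>0$. Contrapositively, grafting an arbitrary $T$ under a fixed tree of height $M-1$ shows that $\gamma=0$ forces $\N(T)=g(|T|)+\sum_i\alpha_i n_{\st_i}(T)$. Lemma~\ref{expectation of products of n(T) with finite tree size}(c) then gives $\Var\N(\tc_n)=O(1)$.
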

An explicit expression for $\mu$, in terms of the degree sequence of $\st$, can be found in \cite[Equation~(3.2)]{J21}. However, our method does not provide an explicit expression for the constant $\gamma$.

The proof of this theorem uses a truncation argument together with variance estimates. This approach was suggested by Janson in \cite{J21}, and it has already been applied to prove a general central limit theorem in \cite[Theorem~1]{ralaivaosaona2020}. Although the toll function $\n(\cdot)$ is local for a fixed $\st$, the result in \cite[Theorem~1]{ralaivaosaona2020} does not apply in our setting because $\n(\cdot)$ fails to satisfy the condition stated in Equation~(2) of that theorem when $\st$ has height at least $2$. Therefore, a different approach is required.

We remark that the condition $\E(\xi^{2\Delta+1}) < \infty$ is already strong enough to cover most classical families of random trees, including rooted labelled trees and plane trees; see \cite{D09} for further examples. In Section~\ref{sec5}, we show that this moment assumption is not far from optimal in the sense that if $\E(\xi^{2\Delta}) = \infty$, then there exists a tree $\st$ with maximum outdegree $\Delta$ for which the variance $\Var(\N(\tc_n))$ grows superlinearly in $n$.

Throughout, asymptotic statements are taken as $n \to \infty$ unless explicitly stated otherwise. We write $f(n)=O(g(n))$ to mean that there exist constants $C>0$ and $n_0$ such that $|f(n)| \le C\,|g(n)|$ for all $n \ge n_0$. We write $f(n)=o(g(n))$ if $f(n)/g(n) \to 0$, and $f(n) \sim g(n)$ if $f(n)/g(n) \to 1$. We also use Vinogradov notation: $f(n) \ll g(n)$ means $f(n)=O(g(n))$, and $f(n) \gg g(n)$ means $g(n) \ll f(n)$. 
\section{Preliminaries}

In this section, we establish several auxiliary lemmas; the first is the most important. We also prove the mean estimate in Theorem~\ref{main theorem}. Before stating our key lemma, let us first fix notation. For a tree $T$ and a positive integer $M$, the \emph{cut-off tree} $T^{(M)}$ is the subtree induced by the nodes at distance at most $M$ from the root. The number of nodes at distance exactly $M$ from the root in $T$ is denoted by $w_M(T)$.

The infinite size-biased tree, also known as the Kesten tree~\cite{K86}, is denoted by $\htc$. Its distribution can be defined locally by 
\[
\forall M,\  \forall T,\  \prob{\htc^{(M)}=T}=w_M(T)\prob{\tc^{(M)}=T}.
\]
In particular, its root-degree distribution, denoted by $\hxi$, is defined by $\mathbb{P}(\hxi = k) = k\,\mathbb{P}(\xi = k)$. For a comprehensive background on Galton--Watson trees and many useful results, see \cite{J06,J12}. Throughout, we follow the notation of \cite{ralaivaosaona2020}.
\begin{lemma}\label{expectation of products of n(T) with finite tree size}
Let $\st$ and $\st'$ be fixed trees with maximum outdegree at most $\Delta$ and height at most $M$. Then the following statements hold:
\begin{itemize}
\item[(a)] If $\mathbb{E}(\xi^{\Delta+2}) < \infty$, then $\mathbb{E}(\n(\tc)\,|\tc^{(M)}|) < \infty$ and $\mathbb{E}(\n(\htc)\,|\htc^{(M)}|) < \infty$.\label{st:a}
\item[(b)] If $\mathbb{E}(\xi^{\Delta+2}) < \infty$, then $\mathbb{E}(\n(\tc_n)\,|\tc_n^{(M)}|) = O(1)$ as $n \to \infty$.
\item[(c)] If $\mathbb{E}(\xi^{2\Delta+1}) < \infty$, then $\mathbb{E}(n_{\st}(\tc_n)\,n_{\st'}(\tc_n)) = O(1)$ as $n \to \infty$.
\end{itemize}
\end{lemma}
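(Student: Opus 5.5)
Everything reduces to moment bounds over the first $M$ generations, because $\n(T)$ and $|T^{(M)}|$ depend only on $T^{(M)}$. The basic pointwise bound comes from counting root-occurrences of $\st$ level by level. At a node $v$ of $T$ that is the image of a node of $\st$ with outdegree $k\le\Delta$, the number of order-preserving choices of children is $\binom{d_T(v)}{k}\le d_T(v)^{k}$. Hence
\[
\n(T)\le \prod_{u\in \st} d_T(\phi(u))^{\deg_{\st}(u)}
\]
summed over partial embeddings. Equivalently, $\n(T)\le \sum_{\text{embeddings}}\prod_u \binom{d(\phi(u))}{\deg u}$.

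For part (a) in $\tc$, the plan is to condition generation by generation. Write $|\tc^{(M)}|=\sum_{j\le M} Z_j$. A product $\n(\tc)Z_j$ is a sum over an embedding of $\st$ together with a marked node at depth $j$. The marked node either lies in a subtree hanging off the embedding or is reached through the embedding's nodes. The expected number of descendants at distance $\ell$ of a fixed node is $1$ by criticality, so marking a node costs one extra factor $d(v)$ at only one vertex along the path from the embedding, namely where the path leaves the image. Independence across nodes of the Galton--Watson tree then gives a bound by a finite product of factors $\E(\xi^{k})$ and $\E(\xi^{k+1})$ with $k\le\Delta$. This is finite under $\E(\xi^{\Delta+1})<\infty$, and in particular under $\E(\xi^{\Delta+2})<\infty$.

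For $\htc$ we use the spine decomposition. The spine nodes have degree $\hxi$, which costs one extra moment: $\E(\hxi^{k+1})=\E(\xi^{k+2})$. Off the spine the tree is just independent copies of $\tc$. Hence the bound requires $\E(\xi^{\Delta+2})<\infty$, which explains the hypothesis.

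For parts (b) and (c) we need uniformity in $n$. The standard tool is the cut-off representation. For a fixed finite $T$ of height $M$ with $w=w_M(T)$ leaves at level $M$,
\[
\prob{\tc_n^{(M)}=T}=\prob{\tc^{(M)}=T}\,\frac{w\,\prob{S_{n-|T|+w}=n-|T|}}{(n-|T|+w)\,\prob{S_n=n-1}}\cdot\frac{n}{1}\ \text{(up to normalisation)},
\]
where $S_m$ is a sum of i.i.d. copies of $\xi$. By the local limit theorem (with finite variance), this ratio is bounded by $C\,w$ uniformly for $|T|\le n/2$. For $|T|>n/2$ we bound the ratio crudely by $Cn$ and use the fact that $\prob{|\tc^{(M)}|>n/2}$ times the relevant polynomial moment is $o(1/n)$. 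Thus
\[
\E\bigl(F(\tc_n^{(M)})\bigr)\ll \E\bigl(F(\tc^{(M)})\,w_M(\tc^{(M)})\bigr)+\text{tail}=\E\bigl(F(\htc^{(M)})\bigr)+\text{tail}.
\]
For (b), take $F=\n\cdot|\cdot^{(M)}|$; part (a) for $\htc$ then gives $O(1)$. For (c), take $F=\n\,n_{\st'}$ and run the same embedding-counting argument for pairs of embeddings in $\htc$. At one vertex the degrees of the two images combine to give $d^{2\Delta}$, and on the spine this becomes $\E(\hxi^{2\Delta})=\E(\xi^{2\Delta+1})$. This is exactly the stated hypothesis.

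The main obstacle is the tail term where $|\tc^{(M)}|$ is comparable to $n$, since there the local-limit ratio is not uniformly bounded. I would first handle it with Markov's inequality: $F\le |T^{(M)}|^{c}$ is not available under the moment assumption, so instead I would bound $F\cdot\mathbf 1\{|T|>n/2\}\le (2/n)F\,|T|$. That brings us back to an expectation of the type in (a), which is finite. The combinatorial counting of pairs of embeddings with shared vertices in (c) also needs care, but it is routine.
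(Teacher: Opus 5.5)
Part (a) is fine. You count pairs (embedding of $\st$, marked node) directly, whereas the paper inducts on the height, and both routes need exactly $\E\xi^{\Delta+1}$ for $\tc$ and $\E\xi^{\Delta+2}$ for $\htc$. Your bulk estimate in (b) and (c) is also correct: for $|T|\le n/2$ your formula gives $\prob{\tc_n^{(M)}=T}\le C\prob{\htc^{(M)}=T}$.

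\textbf{The gap is in the tail term of (b) and (c).} This is exactly where those parts are nontrivial. Write $m=n-|T|+w=n-|T^{(M-1)}|$. Your (correct) formula reads
\[
\frac{\prob{\tc_n^{(M)}=T}}{\prob{\tc^{(M)}=T}}=\frac{\frac{w}{m}\prob{S_m=m-w}}{\prob{|\tc|=n}} .
\]
The numerator is the probability that $w$ independent copies of $\tc$ have total size $m$. It is at most $1$, and it equals $\prob{\xi=0}$ when $m=w=1$ (for instance $|T|=n$ with a single node at depth $M$). So the best uniform crude bound on the ratio is $1/\prob{|\tc|=n}\asymp n^{3/2}$, not $Cn$.

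The Markov step also does not return you to (a). The quantity $\E\bigl(F(\tc)\,|\tc^{(M)}|\bigr)$ is $\E\bigl(\n(\tc)|\tc^{(M)}|^2\bigr)$ in (b) and $\E\bigl(\n(\tc)\,n_{\st'}(\tc)\,|\tc^{(M)}|\bigr)$ in (c). By your counting these are finite under the respective hypotheses. However, they only make the tail expectation $O(1/n)$, and against the factor $n^{3/2}$ that gives $O(\sqrt n)$, not $O(1)$. Pushing Markov to the power $3/2$ would in general require about $\E\xi^{\Delta+5/2}$, respectively $\E\xi^{2\Delta+3/2}$, which is more than the hypotheses provide. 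So as written, (b) and (c) are not proved.

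\textbf{The missing idea is to transfer from $\tc_n$ one generation at a time, not the whole cut-off tree at once.} This is what the paper's induction on the height does.
\begin{itemize}
\item Condition $\tc_n$ on its root degree $k$ and on the branch sizes $n_1,\dots,n_k$. Then the branches are independent copies of the $\tc_{n_i}$.
\item The induction hypothesis, used uniformly in $n_i$, bounds the conditional expectation by $O\bigl((1+k)^{\Delta+1}\bigr)$, respectively $O\bigl(\binom{k}{d}\binom{k}{q}\bigr)$.
\item What remains is $\E(\deg\tc_n)^r$ with $r=\Delta+1$, respectively $r=2\Delta$. This is $O(1)$ whenever $\E\xi^{r+1}<\infty$, by \cite[Lemma~2.2]{J06}.
\end{itemize}
At the root there is no tail problem. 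Your own formula with $M=1$ always has $m=n-1$. Hence $\prob{\deg\tc_n=k}\le Ck\,\prob{\xi=k}$ uniformly in $k$ and $n$.
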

\begin{proof}
We prove these statements by induction on the height of $\st$.

For the first statement of Part (a), assume $M=1$, i.e., $\st$ is a star; suppose that $\st$ has $d\leq \Delta$ leaves. Then
\[
\n(\tc)=\binom{\deg(\tc^{(M)})}{d}
\quad\text{and}\quad
|\tc^{(M)}|=1+\deg(\tc^{(M)}),
\]
where $\deg(\cdot)$ denotes the root degree. Since $M=1$, the root degree $\deg(\tc^{(M)})$ has the same distribution as $\xi$. Taking expectations yields
\[
\mathbb{E}\bigl(\n(\tc)\,|\tc^{(M)}|\bigr)
= \E\!\left((1+\xi)\binom{\xi}{d}\right)
\le \E\!\left((1+\xi)^{\Delta+1}\right),
\]
which is finite because $\E(\xi^{\Delta+2})<\infty$ implies $\E(\xi^{\Delta+1})<\infty$.

Similarly,
\[
\mathbb{E}\bigl(\n(\htc)\,|\htc^{(M)}|\bigr)
= \E\!\left((1+\hxi)\binom{\hxi}d\right)
\le \E\!\left((1+\hxi)^{\Delta+1}\right).
\]
This expectation is finite provided $\E(\hxi^{\Delta+1})<\infty$. Since $\hxi$ is the size-biased version of $\xi$ (and $\E\xi=1$), we have
\[
\E(\hxi^{\Delta+1})=\E(\xi^{\Delta+2})<\infty.
\]
This establishes the base case $M=1$.

Now assume that $M > 1$, and let $t_1, t_2, \dots, t_d$ be the branches of $\st$, ordered from left to right. Clearly, the maximum outdegrees and heights of these branches are bounded above by $\Delta$ and $M-1$, respectively. Condition on the event $\deg(\tc) = k$, and let $T_1, T_2, \dots, T_k$ be the branches of $\tc$, also ordered from left to right. Then
\[
\n(\tc)\,|\tc^{(M)}|
= \left(1 + \sum_{i=1}^{k} \bigl|T_i^{(M-1)}\bigr|\right)
  \sum_{(i_1,\dots,i_d)} \prod_{j=1}^{d} n_{t_j}(T_{i_j}),
\]
where the sum on the right-hand side is over all increasing $d$-tuples $(i_1,\dots,i_d)$ with
$1 \le i_1 < \cdots < i_d \le k$. Taking the conditional expectation yields
\begin{equation}\label{eq:exp-T}
\mathbb{E}\!\left( \n(\tc)\,|\tc^{(M)}| \,\middle|\, \deg(\tc)=k \right)
= \mathbb{E}\!\left( \sum_{(i_1,\dots,i_d)} \prod_{j=1}^{d} n_{t_j}(T_{i_j}) \right)
  + \mathbb{E}\!\left( \sum_{\substack{1\le i\le k \\ (i_1,\dots,i_d)}} 
     \bigl|T_i^{(M-1)}\bigr| \prod_{j=1}^{d} n_{t_j}(T_{i_j}) \right).
\end{equation}
Since the $T_i$ are independent copies of $\tc$, the first term equals
\[
\sum_{(i_1,\dots,i_d)} \prod_{j=1}^{d} \mathbb{E}\bigl(n_{t_j}(\tc)\bigr).
\]
By the induction hypothesis, $\mathbb{E} \left( n_{t_j}(\tc) \right)$ is finite for every $j$. So the above term is also finite. For the second term on the right-hand side of \eqref{eq:exp-T}, two cases can occur:
\begin{itemize}
    \item If $T_i = T_{i_j}$ for some $j$, say $j_0$, then
    \[
    \mathbb{E} \left( \sum_{\substack{(i_1, i_2, \dots, i_d)}} \left| T_i^{(M-1)} \right| \prod_{j=1}^{d} n_{t_j}(T_{i_j}) \right)
    = \sum_{\substack{(i_1, i_2, \dots, i_d)}} \mathbb{E} \left( \left| T_i^{(M-1)} \right| n_{t_{j_0}}(T_i^{(M-1)}) \right) \prod_{j \neq j_0} \mathbb{E} \left( n_{t_j}(T_{i_j}) \right).
    \]
    Since the $T_j$'s are independent copies of $\tc$, the above is equal to
    \[
    \sum_{\substack{(i_1, i_2, \dots, i_d)}} \mathbb{E} \left( \left| \tc^{(M-1)} \right| n_{t_{j_0}}(\tc) \right) \prod_{j \neq j_0} \mathbb{E} \left( n_{t_j}(\tc) \right).
    \]
    By the induction hypothesis, each factor is finite and independent of $k$, so this term is finite.

    \item If $T_i \neq T_{i_j}$ for any $j$, then the $T_{i_j}$'s, together with $T_i$, are independent of each other, and we have
    \[
    \mathbb{E} \left( \sum_{\substack{(i_1, i_2, \dots, i_d)}} \left| T_i^{(M-1)} \right| \prod_{j=1}^{d} n_{t_j}(T_{i_j}) \right)
    = \sum_{\substack{(i_1, i_2, \dots, i_d)}} \mathbb{E} \left( \left| T_i^{(M-1)} \right| \right) \prod_{j=1}^{d} \mathbb{E} \left( n_{t_j}(T_{i_j}) \right),
    \]
    which is equal to
    \[
    \sum_{\substack{(i_1, i_2, \dots, i_d)}} \mathbb{E} \left( \left| \tc^{(M-1)} \right| \right) \prod_{j=1}^{d} \mathbb{E} \left( n_{t_j}(\tc) \right).
    \]
    Once again, by the induction hypothesis, this is finite (note that $\mathbb{E} ( | T_i^{(M-1)} | )$ is finite by \cite[Lemma~3]{J16}, and the other factors are finite by the induction hypothesis).
\end{itemize}
Combining the two cases, we have some constant terms summed over $i \in \{1, 2, \dots, k\}$ and $(i_1, \dots, i_d)$ with the above properties. Thus, there exists a constant $C \geq 0$, independent of $k$, such that
\[
\mathbb{E} \left( \n(\tc) |\tc^{(M)}| \mid \deg(\tc) = k \right) \leq C (k+1) \binom{k}{d} \leq C (k+1)^{\Delta+1}.
\]
Taking the expectation again, we get
\[
\mathbb{E} \left( \n(\tc) |\tc^{(M)}| \right) \leq C \mathbb{E} \left( (\xi+1)^{\Delta+1} \right),
\]
which we have already established to be finite. Hence, we are done for the first part of Statement~(a).

The second part of Statement (a) can be handled in the same way. Indeed, conditioning on the event $\deg(\htc) = k$, the tree $\htc$ consists of a root attached to a copy of $\htc$ and $k - 1$ copies of $\tc$, and all these branches are independent of one another. The only difference is that, in this case, we may encounter terms such as $\mathbb{E}( n_{t_j}(\htc) |\htc^{(M-1)}| )$, $\mathbb{E}( |\htc^{(M-1)}| )$, and $\mathbb{E}( n_{t_j}(\htc) )$. However, these terms are all finite by the induction hypothesis. Thus, we deduce that there exists a constant $C' \geq 0$ such that
\[
\mathbb{E}(\n(\htc) |\htc^{(M)}|) \leq C' \mathbb{E} \left( (\hxi + 1)^{\Delta + 1} \right),
\]
which is finite, as we have seen before. Therefore, the proof of Part (a) is complete.

For Part (b), we also proceed by induction in a similar fashion. So, for $M = 1$, (assuming that $\st$ has $d\leq \Delta$ leaves) we have  
$\n (\tc_n) = \binom{\deg(\tc_n)}{d}$ and $|\tc_n^{(M)}| = 1 + \deg(\tc_n)$. Hence,
\[
\mathbb{E}(\n(\tc_n) |\tc_n^{(M)}|) \leq \mathbb{E}\left((1 + \deg(\tc_n))^{\Delta + 1}\right) = O(\mathbb{E} \, w_1(\tc_n)^{\Delta + 1}).
\]
Applying \cite[Lemma 2.2]{J06}, we deduce that the above is a bounded term.

Now assume that $M > 1$, and condition on the event that $\deg(\tc_n) = k$ and that the branches $T_1, T_2, \dots, T_k$ (ordered from left to right) of $\tc_n$ have sizes $n_1, n_2, \dots, n_k$, respectively. Then, these branches of $\tc_n$ are $k$ independent copies of $\tc_{n_1}, \tc_{n_2}, \dots, \tc_{n_k}$. Using the same argument as in the proof above, we have
\[
\mathbb{E}(n_{t_j}(\tc_n) |\tc_n^{(M-1)}|) = O(1),
\]
for any branch $t_j$ of $\st$, by the induction hypothesis. Hence, we deduce that
\[
\mathbb{E} \left( \n(\tc_n) |\tc_n^{(M)}| \,\Big|\, \deg(\tc_n) = k \;\wedge\; (|T_1| = n_1, |T_2| = n_2, \dots, |T_k| = n_k) \right) = O((1 + k)^{\Delta + 1}).
\]
Taking the expectation again, we obtain
\[
\mathbb{E}(\n(\tc_n) |\tc_n^{(M)}|) = O \left( \mathbb{E}(1 + \deg(\tc_n))^{\Delta + 1} \right) = O(1).
\]
The proof of Part (b) is complete.

For Part (c), we also proceed by induction. If $M = 1$, i.e., $\st$ and $\st'$ are both stars (with $d$ and $d'$ leaves respectively), then we have
\begin{align*}
	\mathbb{E}(n_{\st}(\tc_n) n_{\st'}(\tc_n)) 
	& = \mathbb{E} \left( \binom{\deg(\tc_n)}{d} \binom{\deg(\tc_n)}{d'} \right) \\[.5em]
	& \leq \mathbb{E} \left( \deg(\tc_n)^{2\Delta} \right) \\
	& = \mathbb{E} \left( w_1(\tc_n)^{2\Delta} \right),
\end{align*}
which is bounded above by a constant, see \cite[Lemma 2.2]{J06}, under the assumption that $\mathbb{E}(\xi^{2\Delta + 1}) < \infty$.
For $M > 1$, let $t_1,\, t_2, \dots,\, t_d$ be the branches of $\st$ and $t'_1,\, t'_2,\, \dots,\, t'_q$ the branches of $\st'$. As before, we condition on the event that $\deg(\tc_n) = k$, with $T_1,\, T_2,\, \dots,\, T_k$ denoting the branches of $\tc_n$, and let $n_i = |T_i|$ for $1 \leq i \leq k$. Then, we have
\[
n_{\st}(\tc_n) = \sum_{(i_1,i_2,\dots,i_d)} \prod_{j=1}^{d} n_{t_j}(T_{i_j})
\]
\[
n_{\st'}(\tc_n) = \sum_{(i'_1,i'_2,\dots,i'_q)} \prod_{j=1}^{q} n_{t'_j}(T_{i'_j})
\]
where the summations are over $(i_1,i_2,\dots,i_d)$ and $(i'_1,i'_2,\dots,i'_q)$ (arranged in ascending order), defined similarly as before. Thus,
\begin{align*}
&\mathbb{E}\left(n_{\st}(\tc_n)n_{\st'}(\tc_n)\,\middle|\, \deg(\tc_n)=k \wedge (|T_1|=n_1, |T_2|=n_2, \dots, |T_k|=n_k)\right) \\
&= \sum_{\substack{(i_1,i_2,\dots,i_d)\\(i'_1,i'_2,\dots,i'_q)}} \mathbb{E} \left( \prod_{j=1}^{d} n_{t_j}(T_{i_j}) \prod_{\ell=1}^{q} n_{t'_\ell}(T_{i'_\ell}) \right)
\end{align*}
Since the branches are independent, the expectations that appear in the product above are of the form $\mathbb{E} \, n_{t_j}(\tc_{n_{i_j}})$, $\mathbb{E} \, n_{t'_\ell}(\tc_{n_{i'_\ell}})$, or $\mathbb{E} \left(n_{t_j}(\tc_{n_{i_j}}) n_{t'_\ell}(\tc_{n_{i_j}})\right)$, each of which is bounded (by the induction hypothesis). Hence,
\[
\mathbb{E}\left(n_{\st}(\tc_n)n_{\st'}(\tc_n)\,\middle|\, \deg(\tc_n)=k \wedge (|T_1|=n_1, |T_2|=n_2, \dots, |T_k|=n_k)\right) = O\left(\binom{k}{d}\binom{k}{q}\right)
\]
Therefore,
\begin{align*}
\mathbb{E}\left(n_{\st}(\tc_n)n_{\st'}(\tc_n)\right)
&= O\left(\mathbb{E} \left(\binom{\deg(\tc_n)}{d}\binom{\deg(\tc_n)}{q}\right)\right) \\[.5em]
&= O\left(\mathbb{E} \, \deg(\tc_n)^{2\Delta}\right),
\end{align*}
which is already known to be bounded under our assumption, so the proof of the lemma is complete.
\end{proof}

We now present the first application of Lemma~\ref{expectation of products of n(T) with finite tree size}, which is a version \cite[Lemma 3.5]{J21} with an explicit bound on the error term but under a slightly stronger condition on $\xi$.

\begin{lemma}\label{evaluate n(Tn)}
	Assume that $\xi$ satisfies $\mathbb{E}(\xi^{\Delta+2})<\infty$. Then, we have
	\begin{equation}\label{eq:Mean-Last}
	|\E \n(\tc_n) - \E \n(\htc)| = O(n^{-1/2}).
	\end{equation} 
\end{lemma}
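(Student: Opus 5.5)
We would compare the laws of the cut-off trees $\tc_n^{(M)}$ and $\htc^{(M)}$, where $M$ is the height of $\st$. Since $\n(T)$ depends only on $T^{(M)}$, we can write
\[
\E \n(\tc_n) - \E \n(\htc) = \sum_{T} \n(T)\Bigl(\prob{\tc_n^{(M)}=T} - \prob{\htc^{(M)}=T}\Bigr),
\]
where the sum runs over finite trees $T$ of height at most $M$. The standard formula (see \cite{J06,J12}) gives
\[
\prob{\tc_n^{(M)}=T} = \prob{\tc^{(M)}=T}\,\frac{\prob{S_{n-|T|+w_M(T)} = -w_M(T)}}{\prob{|\tc|=n}},
\]
with the right-hand ratio equal to $\frac{n}{n-|T|+w}\,\frac{w\,\prob{S_{n-|T|+w}=n-|T|}}{n\prob{S_n=n-1}}$. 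Here $w=w_M(T)$ and $S_m$ is a sum of $m$ i.i.d.\ copies of $\xi$, and we use the hitting-time (Otter--Dwass) identity $\prob{|\tc|=m}=\frac1m\prob{S_m=m-1}$ and its generalization for $w$ trees. Comparing with $\prob{\htc^{(M)}=T}=w\,\prob{\tc^{(M)}=T}$, the task reduces to bounding the relative error
\[
\Bigl|\frac{n\,\prob{S_{n-|T|+w}=n-|T|}}{(n-|T|+w)\,\prob{S_n=n-1}} - 1\Bigr|
\]
uniformly, in a form that is linear in $|T|$.

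The key analytic input is the local limit theorem with the smoothness estimate available under $\Var(\xi)<\infty$. Write $m=n-|T|+w$, so the target value $n-|T|=m-w$ differs from the mean $m$ by $-w$. The local limit theorem gives $\prob{S_m=m-w}=\frac{1}{\sigma\sqrt{2\pi m}}e^{-w^2/(2\sigma^2 m)}+o(m^{-1/2})$. However, we need a quantitative rate $O(|T|/n^{3/2})$ rather than an $o(\cdot)$ error. Rather than relying on a refined local CLT (which would need a third moment, available since $\Delta\ge 1$ gives $\E\xi^3<\infty$), I would split into two cases.

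\emph{Case 1: $|T|\le n/2$.} Here I would show
\[
\frac{\prob{\tc_n^{(M)}=T}}{\prob{\htc^{(M)}=T}} = 1+O\!\left(\frac{|T|}{\sqrt n}\right).
\]
This follows from: the factor $\frac{n}{m}=1+O(|T|/n)$; the Gaussian factor $e^{-w^2/(2\sigma^2m)}=1+O(w^2/n)$, which is at most $1+O(|T|/\sqrt n)$ when $w\le\sqrt n$; and a local CLT with $O(m^{-1})$ error under $\E\xi^3<\infty$, which makes $\sqrt{n/m}$ times the probability ratio equal to $1+O(|T|/n)+O(n^{-1/2})$. The $n^{-1/2}$ term is multiplied by $\E\n(\htc)<\infty$, so it is harmless. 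For $w>\sqrt n$, I would use the crude bound that the ratio is $O(1)$ together with $1\le w/\sqrt n\le |T|/\sqrt n$.

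\emph{Case 2: $|T|>n/2$.} Here I would bound the two probabilities separately. The contribution of $\htc$ is at most $\E(\n(\htc)\,|\htc^{(M)}|)\cdot 2/n$. For $\tc_n$, the contribution is $\E(\n(\tc_n)\mathbf 1_{|\tc_n^{(M)}|>n/2})\le \frac2n\E(\n(\tc_n)|\tc_n^{(M)}|)=O(1/n)$ by Lemma~\ref{expectation of products of n(T) with finite tree size}(b).

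Summing up, the difference is
\[
O\!\Bigl(n^{-1/2}\,\E\bigl(\n(\htc)\,(1+|\htc^{(M)}|)\bigr)\Bigr)+O(1/n)=O(n^{-1/2}),
\]
by Lemma~\ref{expectation of products of n(T) with finite tree size}(a). This is exactly where the hypothesis $\E\xi^{\Delta+2}<\infty$ enters.

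The main obstacle is the uniform local-limit estimate in Case 1, namely getting a bound on the ratio that is linear in $|T|$ with a $n^{-1/2}$ scale. If the $O(m^{-1})$-error local CLT is not at hand, a fallback is the classical bound $|\prob{S_m=k}-\prob{S_m=k'}|\le C|k-k'|/m$, valid under finite variance and aperiodicity. Combined with $\prob{S_m=k}\asymp m^{-1/2}$ near the mean, this yields the same $1+O(|T|/\sqrt n)$ ratio directly.
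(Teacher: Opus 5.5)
Your proposal is correct and follows the paper's proof. It uses the same split at $|T|=n/2$ and the same relative estimate $\prob{\tc_n^{(M)}=T}=\prob{\htc^{(M)}=T}\bigl(1+O(|T|/\sqrt n)\bigr)$ for small $T$, summed against Lemma~\ref{expectation of products of n(T) with finite tree size}(a); for large $T$ it uses the same Markov-type bound $\mathbf{1}_{|T|>n/2}\le 2|T|/n$ combined with parts (a) and (b). The only difference is that the paper quotes the ratio estimate as Equation~(5.42) of \cite{J16}, while you derive it from the Otter--Dwass formula and a local limit theorem, which works. Your displayed formula has two small slips: the event should be $S_{n-|T|+w}=n-|T|$, and the denominator should not carry the extra factor $n$. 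Neither matters, because the relative error you then bound is the right one.
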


\begin{proof}
First, by definition, we have
\begin{align*}
    \left|\E \n(\tc_n) - \E \n(\htc)\right| 
    &= \left|\sum_{T} \n(T) \prob{\tc_n^{(M)} = T} - \sum_{T} \n(T) \prob{\hat{\tc}^{(M)} = T} \right| \\
    &\leq S_1 + S_2,
\end{align*}
where 
\begin{align*}
    S_1 &:= \sum_{|T| \leq n/2} \n(T) \left|\prob{\tc_n^{(M)} = T} - \prob{\hat{\tc}^{(M)} = T}\right|, \\
    S_2 &:= \sum_{|T| > n/2} \n(T) \left|\prob{\tc_n^{(M)} = T} - \prob{\hat{\tc}^{(M)} = T}\right|.
\end{align*}

To estimate $S_1$, we make use of Equation~(5.42) in the proof of \cite[Lemma~5.9]{J16}. For $|T| \leq n/2$, the result states that
\begin{equation}\label{eq:ntohat}
    \prob{\tc_n^{(M)} = T} - \prob{\hat{\tc}^{(M)} = T} = \prob{\hat{\tc}^{(M)} = T} \cdot O\left(\frac{|T|}{n^{1/2}}\right).
\end{equation}
Hence,
\begin{alignat*}{2}
    S_1 
    &= \sum_{|T| \leq n/2} \n(T) \prob{\hat{\tc}^{(M)} = T} \cdot O\left(\frac{|T|}{n^{1/2}}\right)\\ 
    &\ll n^{-1/2} \sum_T \n(T)|T| \prob{\hat{\tc}^{(M)} = T}\\
    &\ll n^{-1/2} \E\left(\n(\hat{\tc})|\hat{\tc}^{(M)}|\right).
\end{alignat*}
The latter is bounded by $O(n^{-1/2})$ by Part~(a) of Lemma~\ref{expectation of products of n(T) with finite tree size}.

\medskip

Next, we estimate $S_2$. We trivially have 
\[
S_2 \leq \sum_{|T| > n/2} \n(T) \prob{\tc_n^{(M)} = T} + \sum_{|T| > n/2} \n(T) \prob{\hat{\tc}^{(M)} = T}.
\]
We rewrite the first summation on the right-hand side as follows:
\begin{alignat*}{2}
    \sum_{|T| > n/2} \n(T) \prob{\tc^{(M)} = T} 
    &= \sum_{k \geq 0} k \prob{|\tc^{(M)}| > \tfrac{n}{2} \wedge \n(T) = k} \\
    &= \sum_{k \geq 0} k \prob{\n(\tc^{(M)}) = k} \cdot \prob{|\tc^{(M)}| > \tfrac{n}{2} \mid \n(\tc^{(M)}) = k}.
\end{alignat*}
By Markov's inequality, we obtain
\[
\prob{|\tc^{(M)}| > \tfrac{n}{2} \mid \n(\tc^{(M)}) = k} \leq \frac{2}{n} \E\left(|\tc^{(M)}| \mid \n(\tc^{(M)}) = k\right).
\]
Therefore,
\begin{alignat*}{2}
    \sum_{|T| > n/2} \n(T) \prob{\tc^{(M)} = T} 
    &\leq \frac{2}{n} \sum_{k \geq 0} k \prob{\n(\tc^{(M)}) = k} \E\left(|\tc^{(M)}| \mid \n(\tc^{(M)}) = k\right) \\
    &= \frac{2}{n} \E\left(\n(\tc^{(M)}) \cdot \E\left(|\tc^{(M)}| \mid \n(\tc^{(M)})\right)\right) \\
    &= \frac{2}{n} \E\left(\n(\tc^{(M)}) |\tc^{(M)}|\right) \\
    &= O(n^{-1}),
\end{alignat*}
where we used Lemma~\ref{expectation of products of n(T) with finite tree size} in the final step.

Similarly, by the same argument, we obtain
\[
\sum_{|T| > n/2} \n(T) \prob{\hat{\tc}^{(M)} = T} \leq \frac{2}{n} \E\left(\n(\hat{\tc}^{(M)}) |\hat{\tc}^{(M)}|\right) = O(n^{-1}).
\]
Thus, we conclude that $S_1 + S_2 = O(n^{-1/2})$, completing the proof of the lemma.
\end{proof}

The next lemma gives an asymptotic estimate of the expectation $\mathbb{E}(\N(\tc_n))$.
\begin{lemma}\label{expectation of Nt}
	Assume that $\xi$ satisfies $\mathbb{E}(\xi^{\Delta+2})<\infty$. Then, we have
	\[\mathbb{E}(\N(\tc_n)) =\mu n +o(\sqrt{n})\]
 as $n\to\infty$, where $\mu=\E (\n(\tc))$.
\end{lemma}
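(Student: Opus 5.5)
We will use the standard identity for additive functionals on conditioned Galton--Watson trees. By linearity and exchangeability of fringe subtrees (see \cite{J16}), we have
\[
\E\N(\tc_n)=\sum_{k=1}^{n} n\,\frac{\prob{|\tc|=k}\,\prob{S_{n-k}=n-k-1}}{k\,\ldots}\,\E \n(\tc_k),
\]
or more concretely, following \cite{J16,ralaivaosaona2020},
\[
\E\N(\tc_n)=\frac{n}{\prob{|\tc|=n}}\sum_{k=1}^{n}\E\n(\tc_k)\,\prob{|\tc|=k}\,\frac{\prob{S_{n-k}=n-k}}{n-k+1}\cdot(\text{exact factor}),
\]
where $S_m$ is a sum of $m$ i.i.d.\ copies of $\xi$. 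The cleanest route is to apply the general result for additive functionals: if the toll $f$ satisfies $\E f(\tc_k)=\mu_0+O(k^{-1/2})$-type bounds, then $\E F(\tc_n)=n\E f(\tc)+o(\sqrt n)$. In that framework, \cite[Theorem~1.3 / Lemma 5.x]{J16} states that $\E F(\tc_n)=n\mu+o(\sqrt n)$ as soon as $\E f(\tc_k)$ behaves well, and specifically the condition $\sum_k |\E f(\tc_k)-\E f(\htc)|\,k^{-3/2}\cdot k^{1/2}$-type sums converge. So the plan is as follows. Write $a_k:=\E\n(\tc_k)$ and $\hat\mu:=\E\n(\htc)$. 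By Lemma~\ref{evaluate n(Tn)}, $a_k=\hat\mu+O(k^{-1/2})$. In particular, $a_k$ is bounded, and it converges with a rate.

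We use Janson's decomposition \cite[(6.x)]{J16}:
\[
\E\N(\tc_n)=\sum_{k=1}^n a_k\, n\,\frac{\prob{|\tc|=k}\prob{|\tc_{\text{forest}}|=n-k}}{\prob{|\tc|=n}}\ldots,
\]
which yields $\E\N(\tc_n)=n\sum_{k}a_k\pi_k+ \text{error}$, with $\pi_k=\prob{|\tc|=k}$, so $\sum_k a_k\pi_k=\E\n(\tc)=\mu$. The error is controlled by $\sum_{k\le n} |a_k|\pi_k\cdot O(k/\sqrt n)$ plus a tail $n\sum_{k>n/2}$. Since $\pi_k\asymp k^{-3/2}$, a merely bounded $a_k$ gives $\sum_{k\le n}k\cdot k^{-3/2}/\sqrt n=O(1)$ and a tail of order $n\cdot n^{-1/2}=\sqrt n$. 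This is exactly borderline, so the rate from Lemma~\ref{evaluate n(Tn)} is needed. Concretely, we split $a_k=\hat\mu+(a_k-\hat\mu)$. The constant part $\hat\mu$ contributes exactly $\hat\mu\cdot\E|\tc_n|$-type terms. For the toll $f\equiv\hat\mu$, we have $F(\tc_n)=\hat\mu n$ exactly, so it contributes $\hat\mu n$ with no error. This forces the identity $\mu=\hat\mu$ asymptotically, and indeed $\hat\mu$ must equal $\mu$ up to the structure, since $\sum\pi_k=1$ and the whole toll $\hat\mu$ gives $n\hat\mu$. The remainder $b_k:=a_k-\hat\mu=O(k^{-1/2})$ is then handled by \cite[Theorem~1.5]{J16} (the case $\E f(\tc_k)=O(k^{-1/2})$, or more precisely $o(k^{-1/2})$ being sufficient for $n\E f(\tc)+o(\sqrt n)$).

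The main obstacle is precisely this borderline: $O(k^{-1/2})$ is on the edge of Janson's criterion, which typically needs $\sum_k |b_k| k^{-1}<\infty$ or $b_k=o(k^{-1/2})$. I would handle it by direct computation. The contribution of $b$ is
\[
n\sum_{k\le n} b_k\pi_k\frac{\prob{S_{n-k}=\ldots}}{\prob{|\tc|=n}}\cdot\frac{\ldots}{\ldots}-n\sum_k b_k\pi_k ,
\]
with $\sum_k b_k\pi_k=\mu-\hat\mu$. Using the local limit theorem, the ratio factor is $(1-k/n)^{-3/2}\cdot(1+o(1))$ uniformly for $k\le n/2$, and it equals $1+O(k/n)+o(1)$. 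The $O(k/n)$ part gives $\sum_{k\le n}k^{-1/2}k^{-3/2}k=O(\log n)=o(\sqrt n)$. The $o(1)$ part, since $\sum_k|b_k|\pi_k<\infty$, gives $o(n)$ times a convergent sum; I would need the local limit error to be $o(n^{-1/2})$ relative, which holds under finite variance via the refined estimate $\prob{S_m=m-1}=(2\pi\sigma^2m)^{-1/2}+o(m^{-1/2})$. Applied carefully as in \cite{J16}, with the factor being a function of $k/n$ plus $o(1/\sqrt n)$ error, this should suffice. Finally, the tail $k>n/2$ contributes $n\cdot n^{-1/2}\cdot n^{-3/2}\cdot n\cdot(\text{forest prob})$, which is $O(1)$ after the local limit theorem. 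Hence $\E\N(\tc_n)=\mu n+o(\sqrt n)$.
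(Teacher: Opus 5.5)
Your skeleton is the paper's: centre the toll at $\hat\mu:=\E\n(\htc)$, use Lemma~\ref{evaluate n(Tn)} to get $b_k:=\E\n(\tc_k)-\hat\mu=O(k^{-1/2})$, and use $\sum_k b_k\pi_k=\mu-\hat\mu$ with $\pi_k=\prob{|\tc|=k}$. The paper stops there and applies \cite[Theorem~1.5]{J16} to the centred toll $\n-\hat\mu$ with this $O(k^{-1/2})$ rate. Your worry that $O(k^{-1/2})$ is borderline is unfounded even by the first criterion you list yourself, since $\sum_k|b_k|/k\ll\sum_k k^{-3/2}<\infty$.

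There are also two errors in passing. First, the claim that the constant toll ``forces $\mu=\hat\mu$'' is false. For $\st$ a single edge, $\n(T)=\deg(T)$, so $\mu=\E\xi=1$ while $\hat\mu=\E\hxi=\E\xi^2=1+\sigma^2$. Your later bookkeeping with $\mu-\hat\mu$ is the correct one. Second, the ratio factor behaves like $(1-k/n)^{-1/2}$, not $(1-k/n)^{-3/2}$.

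The genuine gap is in the direct computation you substitute for the citation. The exact identity is $\E\N(\tc_n)=n\sum_{k=1}^n\pi_k\,\E\n(\tc_k)\,r_{n,k}$, where $r_{n,k}:=\Prob(S_{n-k}=n-k)/\Prob(S_n=n-1)$ and $\sum_{k\le n}\pi_k r_{n,k}=1$. Hence $\E\N(\tc_n)-\mu n=n\sum_{k\le n}\pi_k b_k(r_{n,k}-1)-n\sum_{k>n}\pi_k b_k$. Since $\pi_k|b_k|\ll k^{-2}$, everything hinges on showing $\sum_{k\le n/2}k^{-2}|r_{n,k}-1|=o(n^{-1/2})$.

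The estimate you invoke, $\Prob(S_m=m-1)=(2\pi\sigma^2m)^{-1/2}+o(m^{-1/2})$, is just the ordinary local limit theorem. An additive $o(m^{-1/2})$ error is only a relative $o(1)$ error, so it gives back the $o(n)$ you already had, not $o(\sqrt n)$. The decisive step therefore does not follow from what you cite, and ``this should suffice'' leaves it open.

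One correct fix runs as follows. For $\Delta\ge1$ (the case $\Delta=0$ is trivial), your hypothesis gives $\E\xi^3<\infty$. The lattice Edgeworth expansion then gives $\Prob(S_m=m-j)=(2\pi\sigma^2m)^{-1/2}+o(m^{-1})$ for $j\in\{0,1\}$, because the first correction term vanishes at the centre. Hence $r_{n,k}=\sqrt{n/(n-k)}+o(n^{-1/2})$ uniformly for $k\le n/2$, and the sum is $O(\log n)+o(\sqrt n)$.

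Alternatively, with finite variance only, write $\Prob(S_n=n-1)=\sum_j\Prob(S_k=j)\Prob(S_{n-k}=n-1-j)$ and use a local limit theorem for first differences. This gives $|r_{n,k}-1|\ll\sqrt{k/n}$ for $k\le n/2$ and $r_{n,k}-1=o(n^{-1/2})$ for each fixed $k$; then truncate at a large fixed $K$, the tail contributing $\ll\sqrt n\,K^{-1/2}$. With either repair, plus your (correct) bounds for $k>n/2$ and $k>n$, your route becomes a self-contained proof of the special case of Janson's theorem that the paper simply cites.
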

\begin{proof}
Consider the centered toll function \( f(T) = \n(T) - \E(\n(\htc)) \). The associated additive functional is then given by
\[
F(T) = \N(T) - |T| \E(\n(\htc)).
\]
We trivially have
\[
\n(\htc) \leq \n(\htc)\,|\htc^{(M)}| \quad \text{and} \quad \n(\tc) \leq \n(\tc)\,|\tc^{(M)}|.
\]
On the other hand, by Lemma~\ref{expectation of products of n(T) with finite tree size}, both \( \E(\n(\tc)\,|\tc^{(M)}|) \) and \( \E(\n(\htc)\,|\htc^{(M)}|) \) are finite. Therefore, we conclude that \( \E|f(\tc)| \) is also finite.

\medskip
Moreover, by Lemma~\ref{evaluate n(Tn)}, we have
\begin{equation} \label{lkkljdk}
\E\left|f(\tc_n)\right| = \E\left|\n(\tc_n) - \E(\n(\htc))\right| = O(n^{-1/2}).
\end{equation}
In particular, the first part of \cite[Theorem~1.5]{J16} applies, and as a consequence we obtain
\[
\E\left(F(\tc_n)\right) = n\, \E f(\tc) + o(\sqrt{n}).
\]
Writing \( F(\tc_n) \) in terms of \( \N(\tc_n) \), we deduce that
\[
\E \N(\tc_n) - n\, \E(\n(\htc)) = n\, \E f(\tc) + o(\sqrt{n}),
\]
which is equivalent to the equation stated in the lemma.
\end{proof}

Next, we shift our attention to the variance. As we will see in the next section, the proof of the central limit theorem relies on a truncation lemma, which is mainly based on convergence in \(L^2\). Therefore, estimating the variance plays a crucial role in the rest of the proof. We will need to consider the truncated version of our toll function. To that end, we define for every $p,q\in \mathbb{N}\cup \{\infty\}$  the additive functional $F_{p,q}$ associated with the following toll function: 
\[
f_{p,q}(T)=
\begin{cases}
    \n(T)\ &\text{ if }\  p \leq |T|<q \\
    0\ &\text{otherwise}.
\end{cases}
\]
Observe that, in particular, we can write our original functional $\N$ as $F_{1,\infty}$. Furthermore, for each positive integer $k$, let 
\[
\mu_k:=\mathbb E \n(\tc_k).
\]
Then, we define $\hF$ to be the ``centered" additive functional with the toll function 
\[
\hf(T)=\begin{cases}
    \n(T)-\mu_{|T|}\ &\text{ if }\  p \leq |T|<q \\
    0\ &\text{otherwise}.
\end{cases}
\]
We have the following lemma.
\begin{lemma}\label{lem:toll_add}
	Assume that $\xi$ satisfies $\mathbb{E}(\xi^{2\Delta+1})<\infty$ and let $\hF$ and $\hf$ be as defined above. Then, we have
	\begin{equation}
	\E\big(\hf(\tc_n)\hF(\tc_n)\big)=O(1),
	\end{equation}
 as $n\to\infty$, where the implied constant is independent of $p$ and $q$.
\end{lemma}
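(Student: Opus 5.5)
We will expand $\hF(\tc_n)$ using the additive structure and exploit the fact that the toll function $\hf$ is centered at every fixed tree size. Write
\[
\hF(\tc_n)=\sum_{v\in\tc_n}\hf(\tc_{n,v}),
\]
where $\tc_{n,v}$ denotes the fringe subtree rooted at $v$. Then
\[
\E\big(\hf(\tc_n)\hF(\tc_n)\big)=\E\big(\hf(\tc_n)^2\big)+\sum_{v\neq \mathrm{root}}\E\big(\hf(\tc_n)\hf(\tc_{n,v})\big).
\]

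\emph{Diagonal term.} If $n\notin[p,q)$, the whole expression is $0$, since $\hf(\tc_n)=0$. Otherwise
\[
\E\big(\hf(\tc_n)^2\big)=\Var\,\n(\tc_n)\le \E\big(\n(\tc_n)^2\big)=O(1)
\]
by Lemma~\ref{expectation of products of n(T) with finite tree size}(c) with $\st'=\st$. This bound does not depend on $p$ or $q$.

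\emph{Off-diagonal terms, setup.} Let $M$ be the height of $\st$. Then $\n(\tc_n)$ depends only on $\tc_n^{(M)}$, so $\hf(\tc_n)=\n(\tc_n)-\mu_n$ depends only on the top $M$ levels. We split the nodes $v\ne$ root into those at depth $>M$ and those at depth $1,\dots,M$.

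\emph{Deep nodes.} Fix a depth-$M$ node $u$ and consider the fringe subtree $\tc_{n,u}$. Condition on $\tc_n^{(M)}$ and on the sizes $m_u$ of all the fringe subtrees at depth $M$. Given this, these fringe subtrees are independent copies of $\tc_{m_u}$, and they are independent of $\n(\tc_n)$. For any $v$ strictly below depth $M$, lying in $\tc_{n,u}$, the sum $\sum_{v\in\tc_{n,u},\,v\ne u}\hf(\tc_{n,v})$ is a function of $\tc_{m_u}$ only. I claim its conditional expectation vanishes: conditionally on its own size $m'$, each fringe subtree $\tc_{n,v}$ is distributed as $\tc_{m'}$. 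Hence $\E\hf(\tc_{m'})=0$ by the definition of $\mu_{m'}$, and this holds whether or not $m'\in[p,q)$. The same argument applies to $v=u$ itself: given $m_u$, $\E\hf(\tc_{n,u})=0$.

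The point needing care is that conditioning on $\tc_n^{(M)}$ together with $m_u$ leaves $\tc_{n,u}$ uniform as $\tc_{m_u}$. This is the standard branching/Markov property of conditioned Galton--Watson trees. Consequently every node of depth $\ge M$ contributes $0$ after multiplying by $\hf(\tc_n)$ and taking expectations.

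\emph{Shallow nodes.} It remains to handle $v$ at depth $1,\dots,M-1$; this is the main obstacle. Here $\hf(\tc_{n,v})$ is correlated with $\n(\tc_n)$ only through the top $M$ levels. For such $v$, I would bound
\[
|\hf(\tc_{n,v})|\le \n(\tc_{n,v})+\mu_{|\tc_{n,v}|}.
\]
Note that $\mu_k=O(1)$ uniformly by Lemma~\ref{evaluate n(Tn)}, together with finiteness for small $k$. The contribution of the shallow nodes is then at most
\[
\E\Big(\big(\n(\tc_n)+O(1)\big)\sum_{\text{depth}(v)<M}\big(\n(\tc_{n,v})+O(1)\big)\Big).
\]
The $O(1)$ parts are controlled by $\E\big(\n(\tc_n)|\tc_n^{(M)}|\big)=O(1)$ from part (b), plus $\E|\tc_n^{(M)}|=O(1)$.

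For the terms $\E\big(\n(\tc_n)\n(\tc_{n,v})\big)$, we decompose exactly as in the proof of Lemma~\ref{expectation of products of n(T) with finite tree size}(c). Condition on the root degree $k$ and on the branch sizes. Expanding $\n(\tc_n)$ as a sum over $d$-tuples of branches, each product reduces to bounded mixed moments of the form $\E\big(n_{t_j}(\tc_m)\,n_{t_j}(\tc_m)\big)$ or $\E\big(n_{t_j}(\tc_m)\,\n(\tc_m)\big)$. These are handled by part (c) applied to pairs of trees of height $\le M$ and degree $\le\Delta$, used inductively over depth. For the shallow nodes, which form the subtrees $\tc_{n,v}$ at depth $\ell$, this gives a total of order
\[
\E\big(\deg(\tc_n)^{\Delta}\cdot w_{\ell}\text{-type factors}\big).
\]
I expect to bound this by $O\big(\E\,\deg(\tc_n)^{2\Delta}\big)$ plus lower-order terms, which is $O(1)$ by \cite[Lemma~2.2]{J06}. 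All these bounds ignore the truncation, so they are uniform in $p$ and $q$.
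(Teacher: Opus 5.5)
Your treatment of the diagonal term (part (c) with $\st'=\st$) and of the nodes at depth $\ge M$ is correct. The latter uses the centering of $\hf$ at each fixed size together with the Markov property of $\tc_n$ given $\tc_n^{(M)}$ and the subtree sizes, and is exactly the paper's argument for $\Sigma_2$. The gap is in the shallow nodes. You flag these as the main obstacle and then leave them at ``I expect''.

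Conditioning only on the root degree and the root-branch sizes does not reduce the cross terms to the moments you list. Take $v$ at depth $\ell\ge 2$ lying in the branch $T_i$ with $i=i_j$. The factor that appears is $\E\bigl(n_{t_j}(\tc_m)\sum_{d(u)=\ell-1}\n(\tc_{m,u})\bigr)$. This is a joint moment of a root count and counts $\ell-1$ levels down. It is not of the form $\E\bigl(n_{t_j}(\tc_m)\n(\tc_m)\bigr)$, and part (c) does not cover it. Likewise, your final bound $O(\E\deg(\tc_n)^{2\Delta})$ does not follow as written. When $\st$ has height at least $2$, neither $\n(\tc_n)$ nor $w_\ell(\tc_n)$ is controlled by the root degree alone. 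What is actually needed is a bound on the joint quantity $\E\bigl(n_{\st^{(\ell)}}(\tc_n)\,|\tc_n^{(\ell)}|\bigr)$, which is precisely part (b) of Lemma~\ref{expectation of products of n(T) with finite tree size}.

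The missing idea is to condition on the whole cut-off tree $\tc_n^{(\ell)}=T$, together with the sizes of the fringe subtrees $T_1,\dots,T_k$ at level $\ell$. Then $\n(\tc_n)=\sum_\pi\prod_j n_{t_j}(T_{\pi_j})$, where $\pi$ runs over embeddings of $\st^{(\ell)}$ into $T$ and the $t_j$ are the branches of $\st$ hanging from level $\ell$. Every cross term with $\n(T_i)-\mu_{n_i}$ then factorizes into quantities of the form $\E n_{t_j}(\tc_m)$, $\E\n(\tc_m)$ and $\E\bigl(n_{t_j}(\tc_m)\n(\tc_m)\bigr)$. All of these are bounded uniformly by part (c). This gives a conditional bound $O\bigl(w_\ell(T)\,n_{\st^{(\ell)}}(T)\bigr)$. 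Part (b), applied to $\st^{(\ell)}$ for each $\ell\le M-1$, then finishes the argument, uniformly in $p,q$.

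Alternatively, you could keep your root-level recursion. In that case you must state and prove by induction that $\sup_m\E\bigl(n_t(\tc_m)\sum_{d(u)=r}\n(\tc_{m,u})\bigr)<\infty$ for every fringe subtree $t$ of $\st$ and every $r<M$. That yields a bound of order $\E\deg(\tc_n)^{\Delta+1}$ rather than the one you guessed.
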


\begin{proof}
We can assume that \( p \leq n < q \); otherwise, the expression inside the expectation equals zero, and there is nothing to prove.  By definition
\[
\hF(\tc_n)=\sum_{v\in\tc_n}\hf(\tc_{n,v}),
\]
where $\tc_{n,v}$ denotes the fringe subtree of $\tc_n$ rooted at $v$. The strategy is to split the sum according to the distance $d(v)$ from $v$ to the root. So, we define
	\[
		\Sigma_1 := \sum_{d(v)<M}\hf(\tc_{n,v}), \ \text{and }\  \Sigma_2 := \sum_{d(v)\geq M}\hf(\tc_{n,v}).
	\]
 
Let us look at the contribution from $\Sigma_1$ first. Observe that
\[
\E\left(\hf(\tc_n)\Sigma_1\right) = \sum_{\ell=0}^{M-1}\mathbb{E}\left(\hf(\tc_n)\sum_{d(v)=\ell}\hf(\tc_{n,v})\right).
\]
For each $\ell$, let $t_1, \ t_2, \cdots, \ t_d $ be the branches of $\st$ rooted at nodes at level $\ell$. Conditioning on $\tc_n^{(\ell)}$, let $T_1,\ T_2, \cdots ,T_k$ be the branches $\tc_n$ rooted at the nodes at level $\ell$. Further, we also condition on the sizes of $T_1,\ T_2, \cdots T_k$, say $n_1,\ n_2, \cdots , n_k$. Thus, we have
\[
\mathbb{E}\left(\hf(\tc_n)\sum_{d(v)=\ell}\hf(\tc_{n,v})\Big|\tc_n^{(\ell)}=T \wedge (\forall i \  (|T_i|=n_i)) \right)=
\]
\begin{equation}\label{expectation of centered f, proof}
    \E\left(\left(\sum_{\pi}\prod_{j=1}^{d}n_{t_j}(T_{\pi_j})-\mu_n\right)\sum_{\substack{1\leq i\leq k\\ p\leq n_i <q}}\left(\n(T_i)-\mu_{n_i}\right)\right),
\end{equation}
where the first summation inside the expectation is over possible embeddings $\pi$ of $\st^{(\ell)}$ into $T^{(\ell)}$, and for each embedding $\pi$, the node ranked $j$-th at level $\ell$ in $\st^{(\ell)}$ is mapped to the node ranked $\pi_j$ in $T^{(\ell)}$.

We know from Lemma \ref{expectation of products of n(T) with finite tree size} that $(\mu_k)_{k\geq 1}$ is a bounded sequence.  Expanding the term inside the expectation \eqref{expectation of centered f, proof} and using the fact the $T_i$'s are independent copies of conditioned Galton--Watson trees, the expectations of the non-constant terms are of the form $\E(\n(\tc_{n_i}))$,  $\E(n_{t_j}(\tc_{n_i}))$ or $\E(n_{t_j}(\tc_{n_i})\n(\tc_{n_i}))$; these are all bounded terms according to Lemma \ref{expectation of products of n(T) with finite tree size}. Hence
\[
   \mathbb{E}\left(\hf(\tc_n)\sum_{d(v)=\ell}\hf(\tc_{n,v})\Big|\tc_n^{(\ell)}=T \wedge (\forall i \  (|T_i|=n_i))\right)=O\left(w_{\ell}(T)n_{\st^{(\ell)}}(T^{(\ell)})\right)
\]
Taking the expectation again and summing over $\ell\in \{0,1,2,\cdots, M-1\}$, (noting that $w_{(\ell)}(T)\leq |T^{(\ell)}|$) we deduce from Lemma \ref{expectation of products of n(T) with finite tree size} that 
\[
\E\left(\hf(\tc_n)\Sigma_1\right)=O\left(\sum_{\ell=0}^{M-1}\E\left(n_{\st^{(\ell)}}(\tc_n)|\tc_n^{(\ell)}|\right)\right)=O(1).
\]

We take the same approach for the contribution from $\Sigma_2$, and we have  
\[
\E\left(\hf(\tc_n)\Sigma_2\right) = \sum_{\ell\geq M}\mathbb{E}\left(\hf(\tc_n)\sum_{d(v)=\ell}\hf(\tc_{n,v})\right).
\]
However, we know that the height of $\st$ is $M$. Hence, for $\ell\geq M$, $\hf(\tc_n)$ is completely determined by $\tc_n^{(\ell)}$. Thus,
\[
\mathbb{E}\left(\hf(\tc_n)\sum_{d(v)=\ell}\hf(\tc_{n,v})\Big|\tc_n^{(\ell)}=T \wedge (\forall i \  (|T_i|=n_i) )\right)=\hf(T)\sum_{i=1}^{k}\E(\hf(\tc_{n_i}))=0.
\]
Therefore, we deduce using the law of total expectation that $\E\left(\hf(\tc_n)\Sigma_2\right)=0$. Putting all contributions together proves the lemma. 
\end{proof}
We are now ready to prove the linear bound on the variance. Let us state the result.
\begin{lemma}\label{lem:varesti}
Assume that $\xi$ satisfies $\mathbb{E}(\xi^{2\Delta+1})<\infty$ and let $F_{p,q}$ be as defined previously. Then, there exists a function $\varepsilon(p)$ that is independent of $q$ and $n$, such that 
\[
\Var(F_{p,q}(\tc_n))\leq \varepsilon(p)n.
\]
Moreover, $\varepsilon(p)\to 0$ as $p\to\infty$.
\end{lemma}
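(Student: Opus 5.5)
We would follow the standard route for variance bounds of additive functionals of conditioned Galton--Watson trees, as in \cite{J16} and \cite{ralaivaosaona2020}, applied to the centered functional $\hF$. Since $F_{p,q}(T)-\hF(T)=\sum_{v\in T}\mu_{|T_v|}\mathbf{1}[p\le |T_v|<q]$ is a deterministic function of the multiset of fringe-subtree sizes, we first reduce to $\hF$ by writing $\Var(F_{p,q}(\tc_n))\le 2\Var(\hF(\tc_n))+2\Var(G_{p,q}(\tc_n))$. Here $G_{p,q}$ is the additive functional with toll $\mu_{|T|}\mathbf{1}[p\le|T|<q]$. By Lemma~\ref{evaluate n(Tn)}, $\mu_k=\E\n(\htc)+O(k^{-1/2})$. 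So $G_{p,q}$ is the constant $\E\n(\htc)$ times a count of fringe subtrees with sizes in $[p,q)$, plus a toll bounded by $O(p^{-1/2})$ on that range. Janson's variance estimates for size-dependent tolls \cite[Theorem~1.5 and Section~6]{J16} (equivalently the bound used in \cite{ralaivaosaona2020}) give $\Var(G_{p,q}(\tc_n))\le \varepsilon_1(p)n$ with $\varepsilon_1(p)\to0$. For instance, $\Var\#\{v:|T_v|\ge p\}\ll n p^{-1/2}$, and subtracting two such counts handles the window $[p,q)$ uniformly in $q$.

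The main work is $\Var(\hF(\tc_n))\le \E(\hF(\tc_n)^2)$. We expand the square via the recursive (fringe) decomposition. Writing $\hF(T)^2=\sum_{v}\bigl(2\hf(T_v)\hF(T_v)-\hf(T_v)^2\bigr)$ for the additive structure (each pair $(u,w)$ with $w$ a descendant of $u$ is counted at $u$), we get
\[
\E\hF(\tc_n)^2=\sum_{k=p}^{q-1}\E\bigl(N_k(\tc_n)\bigr)\,\E\bigl(2\hf(\tc_k)\hF(\tc_k)-\hf(\tc_k)^2\bigr)+R,
\]
where $N_k$ is the number of fringe subtrees of size $k$. The identity $\E(\text{pairs } u\ne w\text{ incomparable})$ contributes $R$, which has to be handled too. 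By the standard estimate $\E N_k(\tc_n)\ll n\,k^{-3/2}$ for $k\le n/2$, and $O(n^{1/2})$ total mass for $k>n/2$ (\cite[Lemma~5.2]{J16}), together with Lemma~\ref{lem:toll_add} and Lemma~\ref{expectation of products of n(T) with finite tree size}(c) (which bounds $\E\hf(\tc_k)^2$ uniformly), the diagonal part is $\ll n\sum_{k\ge p}k^{-3/2}+n^{1/2}\ll np^{-1/2}+n^{1/2}$.

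For incomparable pairs $u,w$, condition on the shape of $\tc_n$ outside the fringe subtrees at $u$ and $w$ and on their sizes $k,k'$. Given that, $\tc_{n,u}$ and $\tc_{n,w}$ are independent copies of $\tc_k$ and $\tc_{k'}$. Hence $\E\bigl(\hf(\tc_{n,u})\hf(\tc_{n,w})\mid\cdot\bigr)=\E\hf(\tc_k)\E\hf(\tc_{k'})=0$ by the centering $\mu_{|T|}$, so $R=0$ exactly. This is precisely why the authors centered by $\mu_{|T|}$ rather than by $\mu$.

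The main obstacle is the range $k>n/2$ in the diagonal sum, where $\E N_k$ is not small relative to the $O(1)$ bound of Lemma~\ref{lem:toll_add}. We would handle it by noting that fringe subtrees of size $>n/2$ lie on a single spine from the root, whose expected length is $O(\sqrt n)$. That makes the total $O(\sqrt n)=o(n)$, but the bound must be uniform. We would then take $\varepsilon(p)=C(p^{-1/2}+\varepsilon_1(p))$, adjusting for $n<p$, where $F_{p,q}(\tc_n)\equiv0$. The lower-order $n^{1/2}$ term is absorbed since $n\ge p$ forces $n^{1/2}\le n p^{-1/2}$. This yields $\Var(F_{p,q}(\tc_n))\le\varepsilon(p)n$ with $\varepsilon(p)\to0$.
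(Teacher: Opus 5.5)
Your treatment of the centred part $\hF$ is correct and is essentially the paper's argument. Incomparable pairs cancel exactly because $\E\hf(\tc_k)=0$ for every $k$, and the comparable pairs are $O(1)$ per fringe subtree by Lemma~\ref{lem:toll_add} and Lemma~\ref{expectation of products of n(T) with finite tree size}(c). The gap is in the size-only part $G_{p,q}$. The claim $\Var\#\{v:|T_v|\ge p\}\ll np^{-1/2}$ is false, and so is $\Var(G_{p,q}(\tc_n))\le\varepsilon_1(p)n$ whenever $\theta:=\E\n(\htc)>0$. To see this, write $\sigma^2=\Var\xi$ and $P_j=\prob{|\tc|\ge j}\sim\sqrt{2/(\pi\sigma^2)}\,j^{-1/2}$. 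In the {\L}ukasiewicz encoding, let $A_i$ indicate that the $i$-th node has fringe subtree of size at least $p$. These indicators are $(p-1)$-dependent functions of the i.i.d.\ offspring sequence, with $\mathrm{Cov}(A_0,A_h)=P_p(P_{h+1}-P_p)$ for $0<h<p$, and covariance $1-P_p$ with the walk increment over their window. Conditioning on $S_n=n-1$ then gives the following, which also agrees with Janson's variance formula for the bounded local toll $\mathbf{1}[|T|<p]$:
\[
\lim_{n\to\infty}\frac{1}{n}\Var\bigl(\#\{v:|\tc_{n,v}|\ge p\}\bigr)=2P_p\,\E\bigl(|\tc|\,;\,|\tc|<p\bigr)-P_p(1-P_p)-\frac{(1-P_p)^2}{\sigma^2}\;\longrightarrow\;\frac{4-\pi}{\pi\sigma^2}>0\qquad(p\to\infty).
\]
For uniform plane trees this constant is already $1/8$ at $p=2$ (the leaf count) and tends to $(4-\pi)/(2\pi)$. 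The reason is that the indicators are strongly correlated along the $p$-skeleton: it has $\asymp n/p$ branches of length $\asymp\sqrt p$, each fluctuating by $\asymp\sqrt p$.

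So this step cannot be repaired, and the lemma as stated actually fails. For $\st$ a single vertex, $\n\equiv 1$ and $F_{p,\infty}(T)=\#\{v:|T_v|\ge p\}$ exactly. More generally, $F_{p,\infty}=\hF+\theta\,\#\{v:|T_v|\ge p\}+R_p$, where $R_p$ is the size-only functional with toll $(\mu_k-\theta)\mathbf{1}[k\ge p]=O(k^{-1/2})$. Both $\hF$ and $R_p$ have variance $o(n)$ as $p\to\infty$, uniformly in $n$, so the middle term dominates. The paper's proof has the same hole, only hidden. When it applies \cite[Theorem~6.7]{J16} to $F^{(1)}_{p,q}-\theta n$, it bounds it as if the toll were $(\mu_k-\theta)\mathbf{1}[p\le k<q]$. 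The actual toll is $\mu_k\mathbf{1}[p\le k<q]-\theta$, and the discrepancy is precisely the term $\theta\,\#\{v:p\le|T_v|<q\}$ that you isolated. What your argument does prove is the bound for the $\theta$-centred truncation, with toll $(\n(T)-\theta)\mathbf{1}[p\le|T|<q]=\hf(T)+(\mu_{|T|}-\theta)\mathbf{1}[p\le|T|<q]$. That suffices for Section~3, provided $W_{p,n}$ is built from the bounded local toll $(\n(T)-\theta)\mathbf{1}[|T|<p]$ instead of from $f_{1,p}$.
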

\begin{proof}
We begin by decomposing the toll function $f_{p,q}$ into two parts 
$
f_{p,q}(T)=\hf(T)+ f_{p,q}^{(1)}(T),
$ where $f_{p,q}^{(1)}(T)=\E(\n(\tc_{|T|}))=\mu_{|T|}$. What is important to notice in this decomposition is that $\E(\hf (\tc_k))=0$ for every $k$, and $f_{p,q}^{(1)}(T)$ depends only on the size of $T$.   

 The above decomposition of the toll function leads to a decomposition of the corresponding additive function $F_{p,q}$ as follows 
\[
F_{p,q}(T)=\hF{(T)}+F_{p,q}^{(1)}(T).
\]

Let us first deal with the functional $F_{p,q}^{(1)}(T)$ whose toll function $f_{p,q}^{(1)}(T)$ depends only on the size of $T$. We trivially have 
\[
\E f_{p,q}^{(1)}(\tc_n)=
\begin{cases}
    \mu_n \ &\text{if }\ p\le n<q\\
    0 \ &\text{otherwise }.
\end{cases}
\]
We have already established from Lemma~\ref{evaluate n(Tn)} that $\mu_n=\E(\n(\htc))+O(n^{-1/2})$. The term $\E(\n(\htc))$  is a constant which we shall denote by $\theta$. For an additive functional whose toll function depends only on the size of the tree, one can make use of \cite[Theorem 6.7]{J16}. When applied to $F_{p,q}^{(1)}$, we obtain 
\begin{equation}\label{eq:varp1}
    \frac{1}{n}\Var(F_{p,q}^{(1)}(\tc_n))=\frac{1}{n}\Var\left(F_{p,q}^{(1)}(\tc_n)-\theta n\right)\ll \left(\sup_{p \leq k<q}|\mu_k-\theta|+\sum_{p\le k<q}\frac{|\mu_k-\theta|}{k}\right)^2.
\end{equation}
By the asymptotic estimate of $\mu_k$, we have 
\begin{equation}\label{eq:varp2}
    \sup_{p \leq k<q}|\mu_k-\theta|+\sum_{p\le k<q}\frac{|\mu_k-\theta|}{k}\ll \sup_{k\geq p}k^{-1/2}+\sum_{k\geq p}k^{-3/2}.
\end{equation}

For $\Var(\hF(\tc_n))$, as in \cite[Equation (38)]{ralaivaosaona2020} which makes use of \cite[Equation (6.28)]{J16}, we obtain
\[
\frac{1}{n}\Var(\hF(\tc_n))\ll \sum_{p\le k\leq n}\frac{n^{1/2}}{(n-k+1)^{1/2}k^{3/2}}\left|\E(\hf(\tc_k)\hF(\tc_k))\right|.
\]
We have already established in Lemma~\ref{lem:toll_add} that $|\E(\hf(\tc_k)\hF(\tc_k))|=O(1)$ where the right-hand side is independent of $p,q$ and $k$. Moreover, we have  
\begin{align*}
\sum_{p\le k\leq n}\frac{n^{1/2}}{(n-k+1)^{1/2}k^{3/2}}
& = \sum_{p\leq k\leq n/2}\frac{n^{1/2}}{(n-k+1)^{1/2}k^{3/2}}+\sum_{ n/2<k\leq n}\frac{n^{1/2}}{(n-k+1)^{1/2}k^{3/2}}\\
& <  2^{1/2}\sum_{p\leq k\leq n/2}\frac{1}{k^{3/2}}+\frac{2^{3/2}}{n}\sum_{1\leq k<1+n/2}\frac{1}{k^{1/2}}\\
& \ll \sum_{k\geq p}\frac{1}{k^{3/2}}+\frac{1}{p^{1/2}},
\end{align*}
provided that $p\leq n$, otherwise, the sum would be zero. Therefore, we deduce that 
\begin{equation}\label{eq:varp3}
    \frac{1}{n}\Var(\hF(\tc_n))=O\left(\sum_{k\geq p}\frac{1}{k^{3/2}}+\frac{1}{p^{1/2}}\right),
\end{equation}
where the right side is a function of $p$ only that tends zero as $p\to\infty$. To complete the proof of the lemma, we use the Minkowski inequality, which gives
\[
\sqrt{\Var(F_{p,q}(\tc_n))}\leq \sqrt{\Var(\hF(\tc_n))}+\sqrt{\Var(F_{p,q}^{(1)}(\tc_n))}.
\]
The lemma follows easily by combining Equations~\eqref{eq:varp1}--\eqref{eq:varp3}. 
\end{proof}

Observe that if we set $p=1$ and $q=\infty$, then Lemma~\ref{lem:varesti} shows that $\Var(\N(\tc_n))=O(n)$. We prove the central limit theorem in the next section.

\section{Proof of the central limit theorem}

 The key ingredient in the proof of the central limit theorem is the use of the so-called truncation argument, which is based on a classical result in probability theory; see, for example, \cite[Theorem 4.2]{billingsley1968convergence} and \cite[Theorem 4.28]{kallenberg2002foundations}. This technique is quite effective in proving central limit theorems for parameters of random trees; it has been used several times in the past, see for example \cite{holmgren2015limit, MR3984050, ralaivaosaona2020}. Since the argument is based on $L^{2}$-convergence, we also obtain a convergence in variance as a bonus. The idea is summarised in the next lemma whose statement was taken from \cite{ralaivaosaona2020}.
\begin{lemma}\label{lem:0}
	Let $(X_n)_{n\geq 1}$ and $(W_{p,n})_{p,n\geq 1}$ be sequences of centred random variables. If for some random variables $W_p$, $p = 1, 2, \dots$, and $W$ we have
	\begin{itemize}
		\item $W_{p,n}\dto_n W_p$ for every $p\geq 1$, and  $ W_p \dto_p W,$
		\item $\Var(X_n-W_{p,n})=O(\sigma^2_p)$ uniformly in $n$, and $\sigma^2_p\to_p0$,
	\end{itemize}
	then 
	$
	X_n\dto_n W.
	$
	
	If we assume further that $X_n$, $W_{p,n}$, $W_p$, and $W$ have finite second moments, for every $n,p \geq 1$, and 
	\begin{itemize}
	\item $\Var W_{p,n} {\to}_n \Var W_p$, and  $ \Var W_p {\to}_p \Var W,$,
	\end{itemize}
	then we also have $
	\Var X_n {\to}_n \Var W.  
	$ 

\end{lemma}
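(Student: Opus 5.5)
The plan is to prove the two conclusions of Lemma~\ref{lem:0} separately. For the first, I would work with characteristic functions, or equivalently with a bounded Lipschitz test function $h$, since convergence in distribution is equivalent to $\E h(X_n)\to \E h(W)$ for all bounded Lipschitz $h$. For such an $h$ with $|h|\le 1$ and Lipschitz constant $L$, I would use the triangle inequality
\[
|\E h(X_n)-\E h(W)| \le |\E h(X_n)-\E h(W_{p,n})| + |\E h(W_{p,n})-\E h(W_p)| + |\E h(W_p)-\E h(W)|.
\]
The first term is at most $L\,\E|X_n-W_{p,n}|$. Since both variables are centred, $X_n-W_{p,n}$ is centred, so Cauchy--Schwarz gives $\E|X_n-W_{p,n}|\le \sqrt{\Var(X_n-W_{p,n})}\le C\sigma_p$, uniformly in $n$. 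I would then take $\limsup_{n}$ with $p$ fixed: the middle term vanishes because $W_{p,n}\dto_n W_p$. This leaves a bound of $LC\sigma_p+|\E h(W_p)-\E h(W)|$. Letting $p\to\infty$ kills both pieces, which gives $X_n\dto W$. This is exactly the Billingsley-type argument cited before the lemma, so I expect no real obstacle here.

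For the variance statement, I would use the $L^2$ (Minkowski) triangle inequality on standard deviations. Writing $\|Y\|=\sqrt{\Var Y}$ for centred $Y$, we have
\[
\bigl|\sqrt{\Var X_n}-\sqrt{\Var W_{p,n}}\bigr|\le \sqrt{\Var(X_n-W_{p,n})}\le C\sigma_p.
\]
Next I would take $\limsup_n$ and $\liminf_n$ with $p$ fixed, using $\Var W_{p,n}\to_n \Var W_p$. This yields
\[
\sqrt{\Var W_p}-C\sigma_p\le \liminf_n\sqrt{\Var X_n}\le\limsup_n\sqrt{\Var X_n}\le \sqrt{\Var W_p}+C\sigma_p.
\]
Letting $p\to\infty$ and using $\Var W_p\to\Var W$ together with $\sigma_p\to 0$ forces $\sqrt{\Var X_n}\to\sqrt{\Var W}$, and hence $\Var X_n\to \Var W$.

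The only point needing care is that the constant in $\Var(X_n-W_{p,n})=O(\sigma_p^2)$ must be uniform in both $n$ and $p$; I read the hypothesis ``uniformly in $n$'' in this sense, and could otherwise absorb the constant into $\sigma_p$. The rest of the argument is routine.
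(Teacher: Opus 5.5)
Your proof is correct and is essentially the standard approximation argument the paper relies on; the paper gives no proof of its own and instead cites \cite[Theorem~4.2]{billingsley1968convergence} and \cite[Theorem~4.28]{kallenberg2002foundations}. Cauchy--Schwarz turns the uniform variance bound into $\lim_p\limsup_n\E|X_n-W_{p,n}|=0$, bounded Lipschitz test functions give $X_n\dto W$, and your reverse Minkowski inequality for standard deviations is the same step the paper uses to show $(\gamma_p^2)_p$ is Cauchy.

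One correction to your closing aside: a $p$-dependent implied constant cannot be absorbed into $\sigma_p$ without destroying $\sigma_p\to 0$. Under that reading the lemma actually fails: take $X_n\sim\nc(0,1)$, $W_{p,n}=W_p=W=0$ and $\sigma_p=1/p$ with constant $p^2$. So uniformity in $p$ is necessary, not merely one possible reading.
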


We are now ready to prove the central limit theorem in our main theorem.

\begin{proof}[Proof of the CLT in Theorem~\ref{main theorem}]
We apply the above lemma with 
\begin{align*}
X_n  =n^{-1/2}(\N(\tc_n)-\E(\N(\tc_n)), \ \text{and} \  
W_{p,n}  = n^{-1/2}(F_{1,p}(\tc_n)-\E(F_{1,p}(\tc_n))).
\end{align*}
It is clear that 
\[
X_n-W_{p,n} = n^{-1/2}(F_{p,\infty}(\tc_n)-\E(F_{p,\infty}(\tc_n))). 
\]
Hence, by Lemma~\ref{lem:varesti}, $\Var(X_n-W_{p,n})\leq \varepsilon(p)$ which tends to 0 as $p\to\infty$.  Thus, the second condition in Lemma~\ref{lem:0} is satisfied. 

On the other hand, for every fixed $p$ the toll function $f_{1,p}$ associated with $F_{1,p}$ is bounded and local, so by \cite[Theorem 1.13]{J16}, there exists $\gamma^2_p\geq 0$ such that $W_{p,n}\dto_n\mathcal{N}(0,\gamma^2_p)$ and $\Var W_{p,n}\to_n \gamma^2_p$. 

To verify that the first condition in Lemma~\ref{lem:0} is satisfied, we need to show that the sequence $(\gamma^2_p)_p$ is convergent. To achieve this, notice that for $p\leq q$, the additive functional $F_{1,q}-F_{1,p}=F_{p,q}$. Therefore, by Lemma~\ref{lem:varesti}, we obtain
\[
|\gamma_q-\gamma_p|^2\leq \lim_{n\to\infty} \Var(W_{q,n}-W_{p,n})\leq \varepsilon(p).
\]
This implies that $(\gamma^2_p)_p$ is a Cauchy sequence, hence it is convergent. Thus, there exists $\gamma\geq 0$ such that $\gamma_p^2\to \gamma^2$ as $p\to\infty$ and the corresponding $W$ in this case satisfies 
$
W \sim  \mathcal{N}(0,\gamma^2).
$
Furthermore, we also deduce that $\mathrm{Var}(X_n)\to  \gamma^2$ as $n\to\infty$.
\end{proof}

\section{Non-degeneracy}
In the previous section, we proved that, under the conditions of Theorem~\ref{main theorem} on $\st$ and $\xi$, 
\[
\frac{N_{\st}(\tc_n)-\mu n}{\sqrt{n}}\dto \mathcal{N}(0,\gamma^2) \ \text{ and } \ \mathrm{Var}(N_{\st}(\tc_n))=\gamma^2n+o(n),
\]
as $n\to\infty$. These results are of limited significance if $\gamma$ vanishes. So, in this section, we look at the cases in which this can happen.  It is clear that $\gamma$ is a function of $\st$ and $\xi$, and we want to provide a set of conditions on $\st$ and $\xi$ which guarantee that $\gamma$ does not vanish. Note that it is not required in this problem that $\mathbb{P}(\tc =\st)\neq 0$. 

\begin{assumption}\label{ass:1}
        The pair $(\st,\xi)$ satisfies the conditions of Theorem~\ref{main theorem} and there exist trees $\tau_1$ and $\tau_2$ such that 
\begin{itemize}
    \item[(A1)] $\mathbb{P}(\tc=\tau_i)>0$ for $i\in \{1,2\}$,
    \item[(A2)] $|\tau_1|=|\tau_2|$,
    \item[(A3)] $\tau_1^{(M-1)}=\tau_2^{(M-1)}$ (as rooted plane trees)
    \item[(A4)] $N_{\st}(\tau_1)\neq N_{\st}(\tau_2) $,
\end{itemize}
where $M$ is the height of the tree $\st$.
\end{assumption}
    
\begin{proposition}\label{prop:gamneq0}
    Assume that the pair $(\st,\xi)$ satisfies Assumption~\ref{ass:1}, then $\gamma> 0$, i.e, 
    \[
    \frac{N_{\st}(\tc_n)-\mu n}{\gamma \sqrt{n}}\dto \mathcal{N}(0,1) \ \text{ and }\ \mathrm{Var}(N_{\st}(\tc_n))\sim \gamma^2 n,
    \]
    as $n\to\infty$.
\end{proposition}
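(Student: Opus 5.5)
Since the CLT and $\Var(\N(\tc_n)) = \gamma^2 n + o(n)$ are already established, it suffices to show $\gamma>0$. We show $\liminf_n \Var(\N(\tc_n))/n>0$ by conditioning on a suitable $\sigma$-field so that a linear number of independent ``switches'' remain, each of which changes $\N$ by a fixed nonzero amount.

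\emph{Step 1: the swap.} Let $m:=|\tau_1|=|\tau_2|$ and $\delta := N_{\st}(\tau_1)-N_{\st}(\tau_2)\neq 0$. For a tree $T$, call a node $v$ \emph{marked} if the fringe subtree $T_v$ is equal to $\tau_1$ or to $\tau_2$. Let $\mathcal{G}$ be the $\sigma$-field generated by the tree obtained from $\tc_n$ by replacing every marked fringe subtree by a single placeholder labelled ``$\tau$'' (any choice of convention for nested marked subtrees that makes the set of marked positions a function of this shape; e.g.\ mark only maximal occurrences, noting that $\tau_1,\tau_2$ of the same size cannot be nested strictly inside one another). Since the Galton--Watson weight of a tree is the product of $\prob{\xi=\deg v}$ over its nodes, and $\tau_1,\tau_2$ have the same size, conditionally on $\mathcal{G}$ the marked subtrees are i.i.d., equal to $\tau_1$ with probability $\rho:=\prob{\tc=\tau_1}/(\prob{\tc=\tau_1}+\prob{\tc=\tau_2})\in(0,1)$ by (A1).

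\emph{Step 2: effect of a swap on $\N$.} Write $\N(T)=\sum_{u}\n(T_u)$. For $u$ inside a marked subtree, the contribution is the corresponding contribution of $N_{\st}(\tau_i)$. For $u$ outside the marked subtree rooted at $v$, an occurrence of $\st$ rooted at $u$ reaches at most depth $M$ below $u$, hence at most depth $M-1$ below $v$. Moreover, the degree constraints only concern nodes of the image at depth at most $M-1$ below $v$ for non-leaf images and, for leaves at depth $M-1$ or less, their degree too is read off $\tau_i^{(M-1)}$. So by (A3) $\n(T_u)$ is unchanged by swapping $\tau_1\leftrightarrow\tau_2$. Hence, conditionally on $\mathcal{G}$,
\[
\N(\tc_n)= C_{\mathcal{G}} + \delta\cdot \#\{\text{marked subtrees equal to }\tau_1\},
\]
with $C_{\mathcal{G}}$ $\mathcal{G}$-measurable. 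This is the step I expect to need the most care: checking precisely that an occurrence rooted strictly above $v$ sees only $\tau_i^{(M-1)}$ together with the root-degree information, which is contained in $\tau_i^{(M-1)}$ when $M\ge 2$. The case $M=1$ is handled directly, since then (A3) only fixes the root and $\n$ depends only on root degrees.

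\emph{Step 3: linear count.} Let $K_n$ be the number of marked nodes. By the law of total variance,
\[
\Var(\N(\tc_n))\ge \E\,\Var(\N(\tc_n)\mid\mathcal{G}) = \delta^2\rho(1-\rho)\,\E K_n.
\]
The number of fringe subtrees of $\tc_n$ equal to $\tau_1$ or $\tau_2$ satisfies $K_n/n\to \prob{\tc=\tau_1}+\prob{\tc=\tau_2}>0$ in probability (law of large numbers for fringe counts, e.g.\ \cite{J16}); nesting is excluded by equal sizes. Thus $\E K_n\gg n$, and $\Var(\N(\tc_n))\gg n$. Together with $\Var(\N(\tc_n))=\gamma^2 n+o(n)$ this gives $\gamma>0$, and then the CLT from Section~3 yields the stated normalized convergence and $\Var(\N(\tc_n))\sim\gamma^2 n$.
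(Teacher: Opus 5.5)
Your proposal is correct and follows the paper's proof essentially step for step. You condition on the tree with the fringe copies of $\tau_1,\tau_2$ collapsed, note that the refillings are i.i.d.\ (equal to $\tau_1$ with probability $p_1/(p_1+p_2)$), use (A3) to see that each swap shifts $\N$ by exactly $\delta$, and conclude with the law of total variance and the linear growth of the fringe count.

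Your version is slightly more careful than the paper's (placeholder positions among siblings, non-nesting from equal sizes). One correction to the wording of Step 2: images of non-leaf nodes of $\st$ that lie inside $T_v$ are at depth at most $M-2$ below $v$, not $M-1$. That is why their degrees can be read off $\tau_i^{(M-1)}$. The degree of a node at depth exactly $M-1$ is not determined by $\tau_i^{(M-1)}$, but leaves of $\st$ impose no degree constraint, so this does not matter.
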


\begin{proof}
    The proof is based on a method first applied in \cite[Subsection 6.1]{ralaivaosaona2020}. Consider the structure $\tc_n^*$ obtained from $\tc_n$ after deleting every occurrence of $\tau_1$ and $\tau_2$ as fringe subtrees, while marking the nodes to which these copies were attached. Let $n^*$ denote the number of these marked nodes. By the law of total variance 
    \[
    \mathrm{Var}(N_{\st}(\tc_n))\geq \mathbb{E}\left(\mathrm{Var}(N_{\st}(\tc_n)\ |\ \tc_n^*)\right). 
    \]

Let us estimate the right-hand side. Under Assumption~\ref{ass:1}, define
\[
\delta := N_{\st}(\tau_1) - N_{\st}(\tau_2)
\quad \text{and} \quad
p_i := \mathbb{P}(\tc = \tau_i) \quad \text{for each } i \in \{1, 2\}.
\]
Note that $\delta\neq 0$, and $p_1, p_2>0$. To construct \( \tc_n \) from \( \tc_n^* \), we reattach either \( \tau_1 \) or \( \tau_2 \) to each marked node of \( \tc_n^* \). At each marked node, the probability of attaching \( \tau_i \) is given by
\[
\frac{p_i}{p_1 + p_2}, \quad \text{for } i \in \{1, 2\}.
\]
Furthermore, the choices of attaching \( \tau_1 \) or \( \tau_2 \) at different marked nodes are independent. Observe also that, given $\tc_n^*$, the only variation in $N_{\st}(\tc_n)$ comes from the occurrences of $\st$ in those copies of $\tau_i$; this is because of Statement (A3) in Assumption~\ref{ass:1}. Thus, conditioning on $\tc_n^*$, the variable $N_{\st}(\tc_n)$ can be decomposed as follows:
    \[
    N_{\st}(\tc_n)=h(\tc_n^*)+\delta \cdot Y_n,
    \]
    where the variable $h(\tc_n^*)$ is $\tc_n^*$--measurable, and the conditional distribution of $Y_n$ given $\tc_n^*$ is the binomial $ B(n^*,p)$, where $p=p_1/(p_1+p_2)$. Hence, we deduce that 
    \[
    \mathrm{Var}(N_{\st}(\tc_n)\ |\ \tc_n^*)=\frac{\delta^2 p_1p_2}{(p_1+p_2)^2} n^*.
    \]
    
    On the other hand, $n^*$ is the number of occurrences of $\tau_1$ and $\tau_2$ as a fringe subtree in $\tc_n$. Hence,  we have $\mathbb{E}(n^*)\sim (p_1+p_2)n$, see for example \cite[Theorem 1.3]{J16}. Therefore, 
    \[
    \mathbb{E}\left(\mathrm{Var}(N_{\st}(\tc_n)\ |\ \tc_n^*)\right)\sim \frac{\delta^2 p_1p_2}{p_1+p_2} n.
    \]
    Finally, we obtain $\mathrm{Var}(N_{\st}(\tc_n))\gg n$, which proves that $\gamma> 0$.
\end{proof}

\begin{proposition}\label{prop:lin}
    Assume that the pair $(\st,\xi)$ satisfies the conditions of Theorem~\ref{main theorem} and such that $\gamma= 0$, then there exist a function $g:\mathbb{N}\to \mathbb{R}$ and constants $\alpha_1, \alpha_2,\ \ldots$, such that 
    \[
    N_{\st}(T)=g(|T|)+\sum_{i}\alpha_i n_{\st_i}(T)
    \]
    for all $T$ with $\mathbb{P}(\tc=T)>0$, where $\st_1,\ \st_2,\ \ldots$ are the fringe subtrees of $\st$.
\end{proposition}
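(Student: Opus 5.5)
We would argue by contraposition of the mechanism in Proposition~\ref{prop:gamneq0}, but in a refined form. The plan has three steps. First, we show that $\gamma=0$ forces a strong rigidity. Second, we extract that rigidity as a pointwise identity. Third, we solve the resulting functional equation recursively.

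\emph{Step 1: rigidity.} The method of Proposition~\ref{prop:gamneq0} applies whenever there are two trees $\tau_1,\tau_2$ satisfying (A1)--(A2) such that $\tc_n$ can be resampled at many independent fringe positions. We would generalise this. Let $\sigma_1,\sigma_2$ be any two trees of the same size with $\prob{\tc=\sigma_i}>0$. Condition on $\tc_n^*$, the structure obtained by removing all fringe copies of $\sigma_1,\sigma_2$. The conditional variance of $\N(\tc_n)$ is then at least a constant times $n^*$ multiplied by the squared \emph{swap effect}. This effect is the change in $\N$ caused by replacing a single fringe copy of $\sigma_1$ with $\sigma_2$, which is well defined when swaps at distinct positions do not interact.

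Swaps do interact when marked nodes are close, so we only resample a sparse family. We would choose marked nodes whose distance to each other exceeds $2M$, pick them greedily, and keep a positive fraction of them. This still gives $\E(n^*)\gg n$ by \cite[Theorem~1.3]{J16}, applied to a suitable local pattern. The same argument works when the swap is placed inside a fixed local context $C$, that is, a finite tree $C$ with a marked leaf into which $\sigma_i$ is inserted. By \cite[Theorem~1.3]{J16}, such a context still occurs linearly often in $\tc_n$. Since $\gamma=0$ means $\Var(\N(\tc_n))=o(n)$, every such swap effect must vanish.

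\emph{Step 2: the pointwise identity.} The effect of a swap inside context $C$ equals
\[
\N(\sigma_1)-\N(\sigma_2)+\sum_{u}\bigl(n_{\st}(\text{fringe at }u \text{ with }\sigma_1)-n_{\st}(\text{fringe at }u\text{ with }\sigma_2)\bigr),
\]
where $u$ ranges over the at most $M$ ancestors of the insertion point. Each term $n_{\st}$ at an ancestor $u$ decomposes, through the embedding sum used in the proof of Lemma~\ref{expectation of products of n(T) with finite tree size}, into sums of products of counts $n_{\st_i}(\sigma)$ over fringe subtrees $\st_i$ of $\st$, with coefficients that are polynomials in local data of $C$. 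Varying $C$ over enough contexts, for example stars of different degrees attached along a path, should yield a linear system. This system forces $\N(\sigma_1)-\N(\sigma_2)=\sum_i\alpha_i\bigl(n_{\st_i}(\sigma_1)-n_{\st_i}(\sigma_2)\bigr)$, with constants $\alpha_i$ that do not depend on $\sigma_1,\sigma_2$.

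\emph{Step 3: solving for $g$.} Granting Step 2, the quantity $\N(T)-\sum_i\alpha_i n_{\st_i}(T)$ is constant on each size class of trees with positive probability. This constant defines $g(|T|)$, which proves the proposition.

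We expect the main obstacle to be Step 2. The difficulty is showing that the coefficients $\alpha_i$ can be chosen uniformly, independent of the pair $(\sigma_1,\sigma_2)$ and of the context. We would first try the case $M=1$ by hand, where $n_{\st}$ is a binomial coefficient in the root degree. We would then induct on the height. At each level, we would treat the context's contribution as a polynomial in the root degrees of $C$, which forces matching of coefficients, and we must check that enough contexts have positive probability.
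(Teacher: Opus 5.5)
Your outline has the same architecture as the paper's proof: insert $\sigma$ into a fixed context, use the mechanism of Proposition~\ref{prop:gamneq0} to force the swap effect to vanish, and expand $N_{\st}$ of the composite tree in the counts $n_{\st_i}(\sigma)$. But it stops short of a proof. Step~2 is only asserted (``should yield a linear system''), and the obstacle you single out there is not a real one: you do not need to choose the $\alpha_i$ uniformly over many contexts.

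\textbf{One context suffices.} Fix $\tau$ of height $M-1$ with $\mathbb{P}(\tc=\tau)>0$. Let $\tau(\sigma)$ be obtained by replacing a leaf $v$ of $\tau$ at level $M-1$ by $\sigma$. For $|\sigma_1|=|\sigma_2|$, the trees $\tau(\sigma_1),\tau(\sigma_2)$ satisfy (A1)--(A3) of Assumption~\ref{ass:1} directly. So $\gamma=0$ forces $N_{\st}(\tau(\sigma_1))=N_{\st}(\tau(\sigma_2))$ by Proposition~\ref{prop:gamneq0} itself. You need not re-derive a variance bound with a greedily chosen sparse family, whose $\E(n^*)\gg n$ you would still have to justify. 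Condition (A3) is exactly what makes swaps at different fringe positions non-interacting, and fringe copies of two trees of equal size are automatically disjoint.

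\textbf{The expansion is linear.} Occurrences rooted inside $\sigma$ contribute $N_{\st}(\sigma)$. Occurrences rooted at nodes of $\tau$ that are not ancestors of $v$ do not see $\sigma$. An occurrence rooted at a strict ancestor of $v$ maps at most one node $x$ of $\st$ to $v$, by injectivity. Every node landing strictly inside $\sigma$ is a descendant of $x$. The number of such occurrences is $n_{\st_x}(\sigma)$ times a count involving only $\tau$ and $\st$. Hence
\[
N_{\st}(\tau(\sigma))=N_{\st}(\sigma)+c(\tau)+\sum_i\beta_i(\tau)\,n_{\st_i}(\sigma).
\]
This is linear in the $n_{\st_i}(\sigma)$, not a sum of products as you wrote, with coefficients depending only on $\st$ and $\tau$. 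Take $\alpha_i=-\beta_i(\tau)$ and $g(k)=N_{\st}(\tau(\sigma))-c(\tau)$ for any admissible $\sigma$ with $|\sigma|=k$; this proves the proposition, and it is the paper's argument.

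Varying contexts, inducting on the height and matching polynomial coefficients in root degrees would only produce extra relations among the $\alpha_i$ that the statement does not ask for. The ``main obstacle'' you anticipate disappears once you commit to a single context.
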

    
\begin{proof}
    We choose a fixed tree $\tau$ with height $M-1$, such that $\mathbb{P}(\tc=\tau)>0$. For each tree $T$ with $\mathbb{P}(\tc=T)>0$, let $\tau(T)$ be the tree obtained from attaching $T$ to the left-most leaf of $\tau$ at level $M-1$. Since, we are assuming that $\gamma=0$, by Proposition~\ref{prop:gamneq0}, $N_{\st}(\tau(T))$ depends only on the size of $T$ otherwise it would be possible to construct the trees $\tau_1$ and $\tau_2$ in Assumption~\ref{ass:1}, and contradicting the assumption that $\gamma=0$.
    
    On the other hand, by construction $N_{\st}(\tau(T))$ equals $N_{\st}(T)$ plus a linear combination of $n_{\st_i}(T)$'s where the coefficients depends on $\st$ and $\tau$ only. 
\end{proof}

Note that, in Proposition~\ref{prop:lin}, most of the coefficients $\alpha_i$ are in fact zero. For example, given a fringe subtree $\st_i$ of $\st$ with height at least $1$, the coefficient $\alpha_i$ vanishes unless the root of $\st_i$ is the only node at that level in $\st$ that is not a leaf. 

\begin{corollary}
    Assume that the pair $(\st,\xi)$ satisfies the conditions of Theorem~\ref{main theorem} and such that $\gamma= 0$, then
    \[
    \mathrm{Var}(N_{\st}(\tc_n))=O(1) \quad \text{ as } \quad n\to \infty.
    \]
\end{corollary}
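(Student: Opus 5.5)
By Proposition~\ref{prop:lin}, when $\gamma=0$ we have, almost surely for $\tc_n$,
\[
N_{\st}(\tc_n)=g(n)+\sum_i \alpha_i\, n_{\st_i}(\tc_n),
\]
because $\mathbb{P}(\tc_n=T)>0$ implies $\mathbb{P}(\tc=T)>0$. The term $g(n)$ is deterministic once $n$ is fixed, so it does not contribute to the variance. Hence
\[
\Var(N_{\st}(\tc_n))=\Var\Bigl(\sum_i\alpha_i n_{\st_i}(\tc_n)\Bigr).
\]

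The sum over $i$ is finite, since $\st$ has finitely many fringe subtrees. Each $\st_i$ is a subtree of $\st$, so its maximum outdegree is at most $\Delta$ and its height is at most $M$. By Minkowski (or Cauchy--Schwarz), it suffices to bound each second moment $\E(n_{\st_i}(\tc_n)^2)$. Lemma~\ref{expectation of products of n(T) with finite tree size}(c), applied with $\st=\st'=\st_i$ and using $\E(\xi^{2\Delta+1})<\infty$, gives exactly this: $\E(n_{\st_i}(\tc_n)^2)=O(1)$. Therefore
\[
\Var\Bigl(\sum_i\alpha_i n_{\st_i}(\tc_n)\Bigr)\le \Bigl(\sum_i|\alpha_i|\sqrt{\E\, n_{\st_i}(\tc_n)^2}\Bigr)^2=O(1),
\]
where the constant depends only on $\st$, $\xi$ and the fixed coefficients $\alpha_i$.

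The only delicate point is making sure the identity from Proposition~\ref{prop:lin} really holds for every tree in the support of $\tc_n$. One also has to check that the coefficients $\alpha_i$ do not depend on $T$. Both follow from that proposition's proof: the coefficients depend only on $\st$ and the fixed auxiliary tree $\tau$.

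If one prefers not to rely on the possibly informal argument in Proposition~\ref{prop:lin}, there is an alternative. One can use the decomposition of $N_{\st}(\tau(T))$ directly. Since $N_{\st}(\tau(T))$ is a function of $|T|$, we get $N_{\st}(T)=N_{\st}(\tau(T))-c_\tau-\sum_i\alpha_i n_{\st_i}(T)$, which reduces to the same bound. Once the representation is granted, the whole argument is a one-line consequence of Lemma~\ref{expectation of products of n(T) with finite tree size}(c).
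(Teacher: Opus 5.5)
Your proof is correct and follows the paper's argument. The paper's proof is the single remark that the corollary follows from Proposition~\ref{prop:lin} together with Part~(c) of Lemma~\ref{expectation of products of n(T) with finite tree size}; you spell this out by applying Part~(c) with $\st=\st'=\st_i$ to each of the finitely many fringe subtrees and combining the resulting second-moment bounds via Minkowski's inequality. Your closing alternative just unpacks the proof of Proposition~\ref{prop:lin}; it uses the opposite sign convention for the $\alpha_i$, which does not affect the variance bound.
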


\begin{proof}
    This follows easily from Proposition~\ref{prop:lin} and Part (c) of Lemma~\ref{expectation of products of n(T) with finite tree size}.
\end{proof}

There are nontrivial degenerate cases. For example, taking $\st$ to be a path of fixed length already yields degeneracies. Additional examples arise if the $\gcd$-condition on $\xi$ is relaxed; see \cite{J21} for a detailed discussion. Further special cases also occur if one counts occurrences of subtrees from a finite collection of trees, rather than a single tree~$\st$. We do not pursue this direction in this work.

\section{The moment assumption in Theorem~\ref{main theorem}}\label{sec5}

 The central limit theorem stated in Theorem~\ref{main theorem} is guaranteed by the assumption that $\E(\xi^{2\Delta+1})$ is finite. This assumption is natural, as such a moment condition is necessary to make sure that $\E(\n(\tc_n)\, n_{\st'}(\tc_n))$ remains bounded for any two trees $\st$ and $\st'$, each having maximum degree at most $\Delta$, as shown in Part~(c) of Lemma~\ref{lem:0}. However, it is not clear whether Theorem~\ref{main theorem} would still hold if this condition were not satisfied. While it is difficult to answer this completely, we provide two examples which indicate that this moment condition is not too far from the optimal condition that guarantees the full statement of Theorem~\ref{main theorem}.
  
\begin{example}[$\st$ is a path]
   Let us consider the case where $\st$ is a path of length $2$, i.e., we can take $\Delta=1$. Then, as shown in Janson~\cite{J21}, 
    \[
    \N(\tc_n)=n-w_1(\tc_n)-1\quad \text{ and } \quad \mathrm{Var} (\N(\tc_n))=\mathrm{Var}(w_1(\tc_n)).
    \]
    Now, if we assume that $\E(\xi^3)=\infty$ but $\E(\xi^{3-\epsilon})<\infty$ for some $\epsilon \in (0, \tfrac{1}{2})$, then $\E (w_1(\tc_n))\to \E(w_1(\htc))=\E(\xi^2)<\infty$. However, using \eqref{eq:ntohat}, we have
    \[
    \E (w_1(\tc_n)^2)\gg \sum_{k\leq \sqrt[3]{n}}k^2\Prob(w_1(\htc)=k)=\sum_{k\leq \sqrt[3]{n}}k^3\Prob(\xi=k) \to\infty.
    \]
    Similarly, using \eqref{eq:ntohat} again, we obtain the upper bound
    \[
    \E (w_1(\tc_n)^2)\ll \sqrt{n}\sum_{k\leq \sqrt{n}} k^2\Prob(\xi=k)+  n^{(2\epsilon+1)/2}\sum_{\sqrt{n}<k< n/2} k^{3-\epsilon}\Prob(\xi=k)+n^2\prob{w_1(\tc_n)\geq n/2}.
    \]
    The first two terms on the right-hand side are both $o(n)$. For the third term, observe that
    \[
    \prob{w_1(\tc_n)\geq n/2} \leq \frac{\prob{w_1(\tc)\geq n/2}}{\prob{|\tc|=n}} \ll n^{3/2}\prob{w_1(\tc)\geq n/2}, 
    \]
    and by Markov's inequality, we have 
    \[
    \prob{w_1(\tc)\geq n/2} = \prob{w_1(\tc)^{3-\epsilon}\geq (n/2)^{3-\epsilon}}\ll n^{-3+\epsilon}.
    \]
    Hence, we also have 
    \[
    n^2\prob{w_1(\tc_n)\geq n/2} \ll n^{(2\epsilon+1)/2}=o(n).
    \]
    Therefore, we deduce that the variance $\mathrm{Var}(\N(\tc_n))$ is not linear in $n$, but it tends to infinity. Together with the fact that $w_1(\tc_n)\geq 0$ and $\E(w_1(\tc_n)) = O(1)$, this implies that the fluctuations of $\N(\tc_n)$ cannot be Gaussian.

\end{example}  

\begin{example}[$\st$ is a star]
    Here, we let $\st$ be a star with $\Delta$ leaves where $\Delta\geq 3$. Now assume that $\E(\xi^{\Delta+2})<\infty$ and $\E(\xi^{2\Delta})=\infty$. As in the proof of Lemma~\ref{lem:varesti}, we split the toll function as
    $
    \n=\tilde{f}+f^{(1)},
    $
    where $f^{(1)}(T)=\mu_{|T|}$ and $\tilde{f}(T)=\n(T)-\mu_{|T|}$. Let $\tilde {F}$ and $F^{(1)}$ be the additive functionals associated with $\tilde{f}$ and $f^{(1)}$, respectively. From the Minkowski inequality, we have
    \begin{equation}\label{eq:var_break}
        \sqrt{\Var(\N(\tc_n))}\geq \sqrt{\Var(\tilde{F}(\tc_n))}-\sqrt{\Var(F^{(1)}(\tc_n))}.
    \end{equation}
    Using the same argument as in the derivation of Equation~\eqref{eq:varp1}, we obtain from \cite[Theorem 6.7]{J16} that 
    \begin{equation}\label{eq:varF1}
        \sqrt{\Var(F^{(1)}(\tc_n))}\ll \sqrt{n} \left(\sup_{k}|\mu_k-\theta|+\sum_{k=1}^{\infty}\frac{|\mu_k-\theta|}{k}\right)=O(\sqrt{n}),
    \end{equation}
    where the last estimate follows from Lemma~\ref{evaluate n(Tn)}. Next we estimate $\Var(\tilde{F}(\tc_n))$. From \cite[Equation~(6.27)]{J16}, we infer
    \[
    \frac{1}{n}\mathrm{Var}(\tilde  F(\tc_n))=\sum_{k=1}^{n}
\frac{\mathbb{P}(S_{n-k}=n-k)}{\mathbb{P}(S_n = n-1)}
\, \pi_k \, \mathbb{E}\!\left(\tilde  f(\tc_k)\bigl(2\tilde  F(\tc_k)-\tilde f(\tc_k)\bigr) \right).
    \]
    where $S_m$ is the sum of $m$ independent copies of $\xi$, and $\pi_k=\prob{|\tc|=k}$. We write
    \[
    \tilde F(\tc_k)=\sum_{\ell\geq 0}\sum_{d(v)=\ell} \tilde f(\tc_{k,v}),
    \]
    and obtain 
    \[
    \tilde f(\tc_k)\bigl(2\tilde F(\tc_k)-\tilde f(\tc_k)\bigr)=\tilde f(\tc_k)^2+2\sum_{\ell\geq 1}\sum_{d(v)=\ell} \tilde f(\tc_k)\tilde f(\tc_{k,v}).
    \]
    Since $\st$ is a star, the expectation of the second term on the right-hand side is zero (using the same argument as in the proof of Lemma~\ref{lem:toll_add}). Hence,
    \[
    \mathbb{E}\!\left( \tilde  f(\tc_k)\bigl(2\tilde F(\tc_k)-\tilde f(\tc_k)\bigr) \right)=\E(f(\tc_k)^2)=\E(\n(\tc_k)^2)-\mu_k^2.
    \]
    Since $\E(\xi^{\Delta+2})<\infty$, $\mu_k=O(1)$. Therefore, using \eqref{eq:ntohat}, there exists a constant $c_1>0$ such that 
    \[
    \mathbb{E}\!\left( \tilde  f(\tc_k)\bigl(2\tilde  F(\tc_k)-\tilde  f(\tc_k)\bigr) \right) \gg \E(w_1(\tc_k)^{2\Delta})\gg \sum_{j\leq c_1\sqrt{k}}j^{2\Delta+1}\prob{\xi=j}. 
    \]
    From the above, together with \cite[Lemma~5.2, Part~(i)]{J16}, we deduce that there exist constants $c_2,c_3>0$ such that
    \begin{align*}
        \frac{1}{n}\mathrm{Var}(\tilde  F(\tc_n)) 
         \gg \sum_{k\leq c_2n}\frac{1}{k^{3/2}}\sum_{j\leq c_1\sqrt{k}}j^{2\Delta+1}\prob{\xi=j}
         \gg  \sum_{j\leq c_3\sqrt{n}} j^{2\Delta}\prob{\xi=j}.
    \end{align*}
    The latter clearly tends to $\infty$ as $n\to\infty$ because of the assumption $\E(\xi^{2\Delta})=\infty$. Therefore, combining this with Equations \eqref{eq:var_break} and \eqref{eq:varF1}, the variance $\Var(\N(\tc_n))$ in this case grows superlinearly in $n$, that is 
    \[
    \frac{1}{n}\Var(\N(\tc_n))\to \infty \quad \text{ as } \quad n\to \infty. 
    \]
\end{example}
\bibliographystyle{plain}
\bibliography{subtree}
\end{document}